\theoremstyle{plain}
\newtheorem{theorem}{Theorem}[section]
\newtheorem*{theorem*}{Theorem}
\newtheorem{proposition}[theorem]{Proposition}
\newtheorem{lemma}[theorem]{Lemma}
\theoremstyle{definition}
\newtheorem{remark}[theorem]{Remark}
\newtheorem{example}[theorem]{Example}
\newtheorem*{conclusion}{Conclusion}
\newcommand{\enm}[1]{\ensuremath{#1}}          %
\newcommand{\op}[1]{\operatorname{#1}}
\newcommand{\cal}[1]{\mathcal{#1}}
\newcommand{\dbtilde}[1]{\accentset{\approx}{#1}}
\newcommand{\NN}{\enm{\mathbb{N}}}
\newcommand{\PP}{\enm{\mathbb{P}}}
\newcommand{\Bb}{\enm{\cal{B}}}
\newcommand{\Ii}{\enm{\cal{I}}}
\newcommand{\Ll}{\enm{\cal{L}}}
\newcommand{\Mm}{\enm{\cal{M}}}
\newcommand{\Oo}{\enm{\cal{O}}}
\newcommand{\Rr}{\enm{\cal{R}}}
\renewcommand{\phi}{\varphi}
\renewcommand{\theta}{\vartheta}
\renewcommand{\epsilon}{\varepsilon}
\newcommand{\Pic}{\op{Pic}}
\newcommand{\dashdownarrow}{\mathrel{\rotatebox[origin=t]{-270}{\reflectbox{$\dashrightarrow$}}}}
\renewcommand{\to}[1][]{\xrightarrow{\ #1\ }}
\newcommand{\old}[1]{}
\newcommand{\vni}{\vskip 8pt \noindent}
\newcommand{\w}{\widetilde}
\newcommand{\h}{\ensuremath{\mathcal}}
\newcommand{\HL}{\ensuremath{\mathcal{H}^\mathcal{L}_}}
\begin{document}
\title[On the Hilbert scheme of smooth curves in $\PP^5$]
{On the Hilbert scheme of smooth curves of degree $15$ and genus $14$ in $\PP^5$}

\thanks{The first named author is a member of GNSAGA of INdAM (Italy). The second named author was supported in part by National Research Foundation of South Korea 
 (2022R1I1A1A01055306).} 




\author[E. Ballico]{Edoardo Ballico}
\address{Dipartimento di Matematica, Universita` degli Studi di Trento\\
Via Sommarive 14, 38123 Povo, Italy}
\email{ballico@science.unitn.it\\orcid.org/0000-0002-1432-7413}

\author[C. Keem]{Changho Keem}
\address{
Department of Mathematics,
Seoul National University\\
Seoul 151-742,  
South Korea}
\email{ckeem1@gmail.com}

\thanks{}

\subjclass{Primary 14C05, Secondary 14H10}

\keywords{Hilbert scheme, Algebraic curves, Linear series, Arithmetically Cohen-Maaculay}

\date{\today}
\maketitle

\begin{abstract}
We denote by $\mathcal{H}_{d,g,r}$ the Hilbert scheme of smooth curves, which is the union of components whose general point corresponds to a smooth irreducible and non-degenerate curve of degree $d$ and genus $g$ in $\PP^r$. 
In this article, we show that $\h{H}_{15,14,5}$ is non empty and reducible with two components of the expected dimension hence generically reduced. We also study the birationality of the moduli map up to projective motion
and several key properties such as  gonality of a general element as well as specifying smooth elements of each components.

\end{abstract}

\section{Introduction}

We denote by $\h{H}_{d,g,r}$ the Hilbert scheme of smooth curves of degree $d$ and genus $g$ in $\PP^r$.
$\HL{d,g,r}$ is the subscheme of $\h{H}_{d,g,r}$ consisting of components of $\h{H}_{d,g,r}$ whose general element is linearly normal.

In this paper we study a certain peculiar Hilbert scheme of smooth curves in $\PP^5$ of degree $d=15, g=14$. Specifically,  we determine the number of components and study further property of $\h{H}_{15,14,5}$ such as the gonality of smooth element in each component.

Determining the irreducibility of a given Hilbert scheme is rather a non-trivial task, which goes back to the era of Severi \cite{Sev} who asserted that the Hilbert scheme  $\h{H}_{d,g,r}$ is irreducible for 
those triples of $(d,g,r)$ in the range 
\vskip 8pt

(i) $d\ge g+r~~~~$ or 

\noindent
\vskip 8pt
\noindent
in the following  Brill-Noether range which is much wider

\vskip 12pt
(ii) $\rho(d,g,r):=g-(r+1)(g-d+r)\ge 0$.  

\bigskip
The assertion of Severi turns out to be true for $r=3, 4$ under the condition (i); cf. \cite{E1, E2}. It is  also known that $\h{H}_{d,g,3}$ is 
irreducible in an extended range $d\ge g$; cf. \cite{KKy1} and references therein. For $r=4$, irreducibility of $\HL{d,g,4}$ also holds in the range $d\ge g+1$ except for some sporadic small values of the genus $g$; cf. \cite{KK3, KKy2}.

For $r=5$ the irreducibility of $\h{H}_{d,g,5}$ is not known in the range $d\ge g+5$ conjectured by Severi; the best known result so far regarding the irreducibility of $\h{H}_{d,g,5}$ is the result of H. Iliev who showed  that
$\h{H}_{d,g,5}$ is irreducible whenever $d\ge \max\{\frac{11}{10}g+2,g+5\}$; cf.
\cite{I2}.

Shifting  our attention to the family of linearly normal curves,  it makes sense talking about the Hilbert scheme  of linearly normal curves $\HL{d,g,r}$
only if $g-d+r\ge 0$ by Riemann-Roch formula; otherwise $\HL{d,g,r}=\emptyset$.
For the case  $r=5$ and in the non-trivial range $d\le g+5$, $\HL{d,g,5}$ is better understood 
than  $\h{H}_{d,g,5}$ in general. Moreover when the degree of the curve $d$ is relatively high with respect to the genus $g$, $\HL{d,g,5}$ behaves in quite reasonable manner;

\begin{itemize}
\item[(a)] $\HL{g+5,g,5}\neq\emptyset$ and is irreducible; \cite[Theorem 2.1]{JPAA}. 
\item[(b)] $\HL{g+4,g,5}\neq\emptyset$ and is irreducible if and only if $g\ge 6$; \cite[Theorem 2.2]{JPAA}.
\item[(c)] $\HL{g+3,g,5}\neq\emptyset$ and is irreducible if and only if $g\ge8$; \cite[Theorem 2.3, Remark 2.4]{JPAA}.
\item[(d)] $\HL{g+2,g,5}\neq\emptyset $ if and only if  $g\ge 10$ and is irreducible if and only if $g\ge 10 ~\&~g\neq  10, 12$; cf. \cite[Remark 3.2 (ii), Prop 3.3, Theorem 3.4, Theorem 3.7]{lengthy}, 
 \cite[Theorem 2.5, Remark 2.6]{JPAA}.
\item[(e)] $\HL{g+1,g,5}\neq\emptyset$ if and only if $g\ge 12$ by \cite[Theorem 2.4]{speciality}. 
However the irreducibility of $\HL{g+1,g,5}$ is known only for very low genus $g$; if $g=12$,
$\HL{g+1,g,5}$ is reducible  and is irreducible if $g=13$;
cf. \cite[proof of Theorem 2.4 \& Theorem 3.4]{speciality}
 \end{itemize}

\vskip 8pt
In this paper we focus our attention on the next case $\HL{15,14,5}$ as an initial attempt for a better understanding of  $\HL{g+1,g,5}$ for $g\ge 14$. 
We also study the birationality of the moduli map up to projective motion (Proposition \ref{e6} and Proposition \ref{e21})
and several key properties such as  gonality of a general element of any component (Proposition \ref{e2} and Remark \ref{e31}) as well as identifying smooth elements of each components; cf.  {\it Conclusion} right before Example \ref{nocom}.
The main result of this paper is the following theorem. 

\begin{theorem} \label{main}$\h{H}_{15,14,5}$ is reducible with two components of the (same) expected dimension.
\end{theorem}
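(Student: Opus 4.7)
The expected dimension of $\h{H}_{15,14,5}$ at a smooth point $[C]$ is
\[
\chi(N_{C/\PP^5}) = (r+1)d - (r-3)(g-1) = 6\cdot 15 - 2\cdot 13 = 64,
\]
so by semicontinuity every component has dimension at least $64$. To prove the theorem it suffices to exhibit two distinct irreducible families of smooth non-degenerate curves of degree $15$ and genus $14$ in $\PP^5$, each of dimension exactly $64$, and then to rule out any further component; the tangent space bound $h^0(N_{C/\PP^5})\ge 64$ will simultaneously give the generic reducedness.

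My plan is to pass to the residual linear series. For a linearly normal $C\in\HL{15,14,5}$, Riemann--Roch gives $h^1(\Oo_C(1))=4$, so by Serre duality $|K_C-\Oo_C(1)|$ is a $g^3_{11}$; thus classifying components of $\HL{15,14,5}$ reduces to classifying pairs $(C,\mathfrak{d})$ with $\mathfrak{d}\in W^3_{11}(C)$ modulo the $\mathrm{PGL}(6)$-action. Since $\rho(14,11,3)=-10$, such pairs lie over a locus in $\Mm_{14}$ of expected codimension $10$, and a natural dichotomy arises from the geometric type of $\mathfrak{d}$:
\begin{itemize}
\item[\textbf{(A)}] curves whose $g^3_{11}$ is \emph{compounded} with a lower-gonal pencil, placing $C$ in a low-gonality stratum of $\Mm_{14}$;
\item[\textbf{(B)}] curves whose $g^3_{11}$ is birational onto its image in $\PP^3$; equivalently, the embedding $C\hookrightarrow\PP^5$ is arithmetically Cohen--Macaulay with the unique admissible $h$-vector $(1,4,6,4)$.
\end{itemize}

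For each family I would compute the dimension as
\[
\dim(\text{moduli stratum})+\dim W^5_{15}(C)+\dim\mathrm{PGL}(6)-\dim\Aut(C)
\]
and verify that it equals $64$; the restriction of the moduli map to each component is then generically finite and, as shown in Propositions \ref{e6} and \ref{e21}, in fact birational. Non-emptiness and smoothness of a general member of each family are obtained from explicit constructions of the type collected in Example \ref{nocom}, where $h^1(N_{C/\PP^5})=0$ provides generic reducedness. The two components are distinguished by the gonality of a general member (Proposition \ref{e2} and Remark \ref{e31}): the gonalities in families (A) and (B) differ, and upper-semicontinuity of gonality on $\Mm_g$ forbids the two loci from merging.

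The delicate step, and the main obstacle, is exhaustiveness: ruling out a putative third component. I would treat this with a Martens--Mumford--Keem-style analysis of $W^3_{11}$ on a genus-$14$ curve combined with the residual equivalence between $g^5_{15}$ and $g^3_{11}$; after stripping base points, any such $g^3_{11}$ must be either compounded with a pencil or birational onto a (necessarily singular) space curve, leaving no room for a third geometric type. Together with the dimension count this forces $\h{H}_{15,14,5}$ to split into exactly two irreducible components of dimension $64$.
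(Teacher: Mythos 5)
Your overall strategy --- pass to the residual series $|K_X(-1)|=g^3_{11}$, exhibit two $64$-dimensional families, and rule everything else out by dimension counts --- is the same as the paper's, and your computation of the expected dimension $64$ is correct. But the dichotomy (A)/(B) on which you hang the proof is not the right one. The paper shows (Proposition \ref{a1} together with Lemma \ref{a2}, via a Castelnuovo--Severi-type argument) that the morphism induced by $|K_X(-1)(-B)|$ is \emph{always} birational onto its image; so your case (A), in which the $g^3_{11}$ is compounded, is empty, and if your dichotomy were exhaustive there could be at most one component --- contradicting the statement you are trying to prove. Your case (B) is also problematic: the asserted equivalence between ``$g^3_{11}$ birational onto its image'' and ``$X\subset\PP^5$ is ACM with $h$-vector $(1,4,6,4)$'' is unjustified, and it fails on the component $\Gamma_2$, whose residual series is birationally very ample but maps onto a six-nodal curve of bidegree $(5,6)$ on a quadric (so that $X$ lies on a sextic surface in $\PP^5$). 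The correct dichotomy is whether the dual curve $C=u_X(X)\subset\PP^3$ lies on a quadric surface or not. A side remark: gonality is \emph{lower} semicontinuous (the locus of gonality $\le k$ is closed), so semicontinuity alone does not forbid the $7$-gonal family from degenerating onto the $5$-gonal one; what actually separates the two components is that both have dimension exactly $64$, so neither can lie in the closure of the other.

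The more serious gap is that the exhaustiveness step, which you correctly identify as the main obstacle, is only gestured at, and it is precisely where almost all of the work lies. No general structure theory of $W^3_{11}$ on a genus-$14$ curve delivers the needed conclusions; one must carry out the case analysis of the paper: (i) a dimension count forcing the base locus $B$ of $|K_X(-1)|$ to be empty on any full component; (ii) for dual curves on a quadric, a computation on the blown-up surface showing that bidegree $(5,6)$ with six nodes gives a very ample $|K_{\widetilde C}(-1)|$ while bidegree $(4,7)$ does not, because the relevant linear system contracts an exceptional curve meeting $\widetilde C$ with multiplicity $2$ (Example \ref{ex1}); (iii) for dual curves on a cubic surface but not on a quadric, a separate elimination for each type of cubic surface (scrolls and projected cones via linear normality, cones over elliptic curves and smooth or normal cubics via dimension counts); (iv) for dual curves on no cubic, the liaison construction linking $C$ to genus-$2$ quintics via quartics, plus an explicit verification (on a K3 quartic, Example \ref{ex2}) that $|K_C(-1)|$ is very ample for at least one, hence a general, such $C$. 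Without items (i)--(iv) the argument does not establish that there are exactly two components, nor even that the quadric family is one of them. As written, the proposal identifies the right residual-series framework and the right numerical targets, but it does not contain a proof.
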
 

Indeed the case $g=14$ on which we focus  is the first non-trivial 
case concerning the irreducibility of $\h{H}_{g+1,g,5}$. For curves with higher genus $g\ge 15$, virtually nothing is known about the 
irreducibility of $\h{H}_{g+1,g,5}$. However the main result of the paper suggests that 
the irreducibility of $\h{H}_{d,g,5}$ beyond the conjectured range $d\ge g+5$ does not 
hold  for $d$ not too much below $g+5$.

It is also worthwhile to mention that $\h{H}_{15,14,5}=\HL{15,14,5}$  in our particular case $(d,g,r)=(15,14,5)$; every smooth curve in $\PP^5$ of degree $d=15$ and genus $g=14$ is linearly normal by Castelnuovo genus bound, i.e. genus $g=14$ is too large for a non-degenerate curve of degree $d=15$ sitting inside $\PP^6$.

The organization of this paper is as follows. In the next section, we prepare some basic preliminaries required for 
our study. We also determine the lower bound of the gonality of a smooth curve in $\h{H}_{15,14,5}.$
In the subsequent section we consider smooth curves $X\in\h{H}_{15,14,5}$ whose {\bf dual curves} $C\subset\PP^3$ -- by definition the image curve of a morphism induced by the residual series $K_X(-1)$ -- lie on quadric surfaces and identify one of the possible components of $\h{H}_{15,14,5}$. In section 4, we consider $X\in \h{H}_{15,14,5}$ whose dual curve
does not lie on a quadric surface and identify further possible component of $\h{H}_{15,14,5}$. 
In the last two sections we study the gonality of some/all curves and prove the uniqueness of a complete very ample linear series $g^5_{15}$ for a general curve  in each component of $\h{H}_{15,14,5}$ (Propositions \ref{e6} and \ref{e21}). One component is formed by $5$-gonal curves (Remark \ref{e31}).  The general element of the other component is $7$-gonal (Proposition \ref{e2}). No element of $\h{H}_{15,14,5}$ has gonality $\le 4$ (Propositions \ref{a1} and \ref{b1}).

For notation and conventions, we  follow those in \cite{ACGH} and \cite{ACGH2}; e.g. $\pi (d,r)$ is the maximal possible arithmetic genus of an irreducible,  non-degenerate and reduced curve of degree $d$ in $\PP^r$ which is usually referred as the first Castelnuovo genus bound. $\pi_1(d,r)$ is the so-called the second Castelnuvo genus bound which is the maximal possible arithmetic genus of  an irreducible, non-degenerate and reduced curve of degree $d$ in $\PP^r$ not lying on a  surface of minimal degree $r-1$; cf. \cite[page 99]{he}, \cite[page 123]{ACGH}.

Following classical terminology, a linear series of degree $d$ and dimension $r$ on a smooth curve $C$ is denoted by $g^r_d$.
A base-point-free linear series $g^r_d$ ($r\ge 2$) on a smooth curve $C$ is called {\bf birationally very ample} when the morphism 
$C \rightarrow \mathbb{P}^r$ induced by  the $g^r_d$ is generically one-to-one onto (or is birational to) its image curve.  A base-point-free linear series $g^r_d$ on $C$  is said to be compounded of an involution ({\bf compounded} for short) if the morphism induced by the linear series gives rise to a non-trivial covering map $C\rightarrow C'$ of degree $k\ge 2$. 
Throughout we work exclusively over the field of complex numbers.

\section{Some generalities and easy remarks }
\begin{remark}\label{new1}
\begin{itemize}
\item[(i)] Let $\Gamma$ be any irreducible component of the Hilbert scheme $\h{H}_{d,g,r}$ of smooth curves of degree $d$ and genus $g$ in  $\PP^r$. We recall that $$\dim \Gamma \ge 3g-3+\rho(d,g,r)+\dim\mathrm{Aut}(\PP^r)=(r+1)d+(3-r)(g-1).$$ In particular each irreducible component
of $\h{H}_{15,14,5}$ has dimension at least $64$.  Thus  an irreducible family $\h{F} \subset \h{H}_{15,14,5}$ such that  $\dim\h{F} \le 63$ is always contained in a larger family.    

\item[(ii)] Let $\xi$ be the natural functorial map $\xi: \h{H}_{15,14,5}\rightarrow\h{M}_{14}$. Let $\Gamma\subset  \h{H}_{15,14,5}$ be an irreducible component. Let  $\h{G}_\Gamma$ be the  irreducible family consisting of pairs 
$(g^3_{11}, p)$;  $g^3_{11}=|K_X(-1)|$, $X\subset\PP^5$ corresponds to $\xi^{-1}(p)$, $p\in\mathrm{Image ~}\xi(\Gamma)\subset\h{M}_{14}$.

\item[(iii)]
To each irreducible component $\Gamma\subset\h{H}_{15,14,5}$ (hence $\dim\Gamma\ge 64$), there is 
an irreducible family $\h{G}_\Gamma$ such that $$\dim\h{G}_\Gamma=\dim\Gamma-\dim\mathrm{Aut}(\PP^5)\ge 29.$$ In other words, if an irreducible family $\h{H}\subset\h{H}_{15,14,5}$ up to projective motion of $\PP^5$ followed by residualization 
of the hyperplane series (which is one-to-one) produces a family of linear series $g^3_{11}$ of dimension strictly less than $29$, then such a 
family $\h{H}$ does not constitute a full irreducible component. 

\end{itemize}
\end{remark}
For the existence part of particular components,  we utilize Example \ref{ex1} and Example \ref{ex2} and then use  Remark \ref{new1} to see that there are exactly two irreducible components of $\h{H}_{15,14,5}$ having the same expected dimension $64$ and hence the first cohomology of the normal bundle of $X$ vanishes for a general element $X$ in each of the $2$ irreducible components.

Fix a smooth $X\in \h{H}_{15,14,5}$ and let $B$ be the base locus of $|K_X(-1)|$. Set $b:= \deg (B)$. By Riemann-Roch $|K_X(-1)(-B)|$ induces a morphism $u_X: X\to \PP^3$. We say that $C:= u_X(X)$ is the {\bf dual curve}{\footnote{Readers are advised not to be confused with  another notion of ``dual curve" in the sense of Pl\"ucker.} of $X$.
Instead of working directly with a family in $\h{H}_{15,14,5}$, we will be working  with the family of (possibly singular) curves in $\PP^3$ which are the family of  dual curves $\{u_X(X); X\in\h{H}_{15,14,5}\}$. In fact, as we will see, this turns out to be more effective and easier to deal with; $\deg u_X(X)<\deg X$, the dimension of the ambient projective space gets lower, etc.

Note that $\deg (C)=\deg u_X(X)=11-\deg B$ if $u_X$ is birational onto its image.
We need the following key result concerning a family of nodal curves on a Hirzebruch surface $\mathbb{F}_e$; we only need it for $e=0$, the smooth quadric surface in $\PP^3$.

\begin{remark}\label{oo1}
\begin{itemize}
\item[(i)]
Take a Hirzebruch surface $\mathbb{F}_e$, $e\ge 0$ and  line bundle $L\cong \Oo_{\mathbb{F}_e}(ah+bf)$ on $\mathbb{F}_e$ such that $a>0$ and  $b\ge ae$; $h^2=-e, h\cdot f=1, f^2=0$.  A general element of $|L|$ is a smooth connected curve of genus   
$q=(a-1)(b-1-\frac{1}{2}ae)$.

\item[(ii)]
Fix an integer $g$ such that $0\le g\le q$. Let $\Sigma_g$ be the locus of all integral curves $Y\in |L|$ with geometric genus $g$. Let $\w{\Sigma}_g$ be the locus of all nodal $Y\in \Sigma_g$ with $\delta:=q-g$ nodes as its only singularities. By \cite[Cor. 4.4]{dedse} a general member of every irreducible component of  $\Sigma_g$ is a nodal curve. For a fixed $p\in\mathbb{F}_e$, being nodal at $p$ imposes three 
conditions on $|L|$ (or in any subvariety of $|L|$) and  hence the family of curves in $|L|$ with one node has codimension $3-\dim\mathbb{F}_e=1$ in $\Sigma_g$. Therefore $\w{\Sigma}_g$ has codimension $\delta$ in $|L|$ and for a general finite subset $S\subset \mathbb{F}_e$ with $\mathrm{Card}(S)=\delta$ there is an integral nodal curve $ T\in |L|$ with
$\mathrm{Sing} (T) =S$. 
By \cite{ty} the locus $\w{\Sigma}_g$ is irreducible of dimension $\dim |L| -\delta$.  Hence $\Sigma_g$ -- which is the closure of $\w{\Sigma}_g$  -- is also irreducible of dimension $\dim |L| -\delta$. 
\end{itemize}
\end{remark}

\vni The following inequality - known as Castelnuovo-Severi inequality - shall be used occasionally; cf. \cite[Theorem 3.5]{Accola1}.
\begin{remark}[Castelnuovo-Severi inequality]\label{CS} Let $C$ be a curve of genus $g$ which admits coverings onto curves $E_h$ and $E_q$ with respective genera $h$ and $q$ of degrees $m$ and $n$ such that these two coverings admit no common non-trivial factorization; if $m$ and $n$ are primes this will always be the case. Then
$$g\le mh+nq+(m-1)(n-1).$$ 
\end{remark}

The following simple fact regarding spanned linear systems on  smooth Del Pezzo surfaces
will be used  in the course of the proof of our main theorem. It should be noted that
the fact is widely known to people as a folklore. However the authors could not find an 
adequate source in the literature. 
\begin{remark}\label{2.3}
Let $S$ be a smooth Del Pezzo surface. A line bundle on $S$ is spanned (resp. very ample) if and only if it is nef (resp. ample) (\cite[Cor. 4.7]{sandra}).  Let $T\subset S$ be an integral projective curve such that $T^2>0$. Since $T$ is integral and $T^2>0$, $T\cdot D\ge 0$ for all curves $D$. Thus the line bundle $\Oo_S(T)$ is nef and hence it is spanned. Thus a general element of $|T|$ is smooth by the Bertini theorem.
\end{remark}

\begin{remark}\label{oo2}
We recall that -- as we have indicated in the introduction --  by Castelnuovo's upper bound for the arithmetic genus of an integral non-degenerate curve in $\PP^r$, $r\ge 6$ (\cite[Th. 3.13]{he}; take $d=15$, $r:= 6$ and hence $m=2$, $\epsilon =4$ to get $\pi(15,6) =13$), each $X\in \h{H}_{15,14,5}$ is linearly normal.
\end{remark}

\begin{remark}\label{oo3}
Being very ample is an open condition in any irreducible family of line bundles with prescribed degree and number of sections on a family of smooth curves of prescribed genus.
\end{remark}

\begin{proposition}\label{a1}
$\h{H}_{15,14,5}$ contains no trigonal curve.\end{proposition}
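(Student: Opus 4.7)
Assume for contradiction that $X \in \h{H}_{15,14,5}$ is trigonal, and let $|A|=g^1_3$ be its (unique, since $g\ge 5$) trigonal pencil. Denote by $|D|$ the hyperplane series and by $|D'|=|K_X-D|$ the residual $g^3_{11}$ given by Riemann-Roch. The plan is to study the position of a generic fiber $A\in|A|$ inside $\PP^5 = |D|^*$: since $h^0(D)=6$, the three distinct points of a generic $A$ span either a line or a plane in $\PP^5$, forcing $h^0(D-A) \in \{3,4\}$, and upper-semicontinuity of $h^0$ on the pencil $|A|$ makes this value constant for generic $A$. We derive a contradiction in each of the two cases.

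Collinear case $h^0(D-A)=4$: the trisecant lines sweep out a $2$-dimensional scroll $S \subset \PP^5$ containing $X$, whose normalization is a Hirzebruch surface $\mathbb{F}_e$ with $e$ the Maroni invariant of $(X,|A|)$. Since each ruling fiber embeds as a line in $\PP^5$, the morphism $\mathbb{F}_e \to \PP^5$ is induced by a linear system $|C_0+mf|$ with $m \in \mathbb{Z}$. On $\mathbb{F}_e$ the curve $X$ lies in $|3C_0 + bf|$, and the adjunction formula $g(X)=2b-3e-2=14$ forces $b = 8+3e/2$, so that $e \in\{0,2,4\}$. The degree constraint
\[
15 \;=\; \deg_{\PP^5} X \;=\; (3C_0+bf)\cdot(C_0+mf) \;=\; 3m + 8 - \tfrac{3e}{2}
\]
becomes $3m = 7 + 3e/2 \in \{7,10,13\}$, and none of these is divisible by $3$. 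Hence no admissible integer $m$ exists, and the collinear case is excluded.

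Triangle case $h^0(D-A)=3$: Riemann-Roch gives $h^0(D'+A) = h^0(D-A)+1 = 4$. The base-point-free pencil trick applied to $|A|$ (whose Koszul sequence yields the exact sequence $0 \to H^0(D'-A) \to H^0(A)\otimes H^0(D') \to H^0(D'+A) \to \cdots$) then produces
\[
h^0(D'-A) \;\geq\; 2 h^0(D') - h^0(D'+A) \;=\; 8-4 \;=\; 4.
\]
Since $X$ is non-hyperelliptic and $\deg(D'-A)=8$, Clifford's inequality is strict, so $h^0(D'-A)=4$ and $|D'-A|$ is a complete $g^3_8$ on $X$. We then exclude the existence of any $g^3_8$ on a trigonal curve of genus $14$: after removing its base locus, the induced morphism $\phi:X \to \PP^3$ has non-degenerate image $C$ of some degree $d \leq 8$. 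If $\phi$ is birational onto $C$, then $p_a(C) \leq \pi(d,3) \leq \pi(8,3) = 9$ contradicts $p_a(C) \geq g_{\mathrm{geom}}(C) = 14$. If $\phi$ has degree $k \geq 2$ onto $C$, then the Castelnuovo-Severi inequality (Remark \ref{CS}) applied to $|A|$ and the $k$-cover (no common factor, since the $g^1_3$ is unique for $g\ge 5$) yields $g(C) \geq 6$ for $k=2$, contradicting $p_a(C) \leq \pi(4,3) = 1$; and for $k \geq 3$ one has $\deg C \leq \lfloor 8/k \rfloor \leq 2$, forcing $C$ to lie in a plane and contradicting non-degeneracy of $\phi$.

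Both cases fail, so $\h{H}_{15,14,5}$ contains no trigonal curve. The main technical step is the non-existence of a $g^3_8$ on a trigonal curve of genus $14$, which requires the above case analysis of the factoring morphism $\phi$ using Castelnuovo's first genus bound together with Castelnuovo-Severi; the scroll numerics in the collinear case and the pencil-trick bookkeeping are otherwise routine.
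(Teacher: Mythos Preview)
Your proof is correct and takes a genuinely different route from the paper's.

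The paper argues via the Martens--Schreyer theory of trigonal curves \cite{ms}. Letting $R$ be the $g^1_3$ and $m$ the Maroni invariant (in their normalization), the bounds $4\le m\le 6$ force $h^0(R^{\otimes 5})=6$ with $R^{\otimes 5}$ compounded, so $\Oo_X(1)\ncong R^{\otimes 5}$. They then split according to whether the dual map $u_X$ (induced by $|K_X(-1)|$ minus base locus) is the triple of the trigonal map: if so, a short computation with the base divisor $B$ produces a $2$-secant violating very ampleness of $\Oo_X(1)$; if not, \cite[Prop.~1]{ms} forces both $\Oo_X(1)$ and $K_X(-1)(-B)$ to be multiples of $R$, a contradiction.

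Your decomposition is instead geometric in $\PP^5$: whether the generic trigonal fibre spans a line or a plane. In the collinear case you realise $X$ as a smooth trisection of the scroll $\mathbb{F}_e$ swept by the trisecant lines, and a parity/divisibility obstruction on $(e,b,m)$ kills all cases. In the triangle case the base-point-free pencil trick plus strict Clifford yields a $g^3_8$, and you dispose of this via the first Castelnuovo bound (birational case) and Castelnuovo--Severi (double cover case), the higher-degree covers being trivially degenerate.

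Both arguments are short; the paper's is terser but leans on \cite{ms} as a black box, whereas yours uses only classical tools already recalled in the preliminaries (Clifford, Castelnuovo bound, Remark~\ref{CS}) plus standard scroll numerology. One small point: you assert that the $e$ of your scroll is the Maroni invariant. This is true, but the argument does not need it---the constraints $e$ even (from $b\in\ZZ$) and $b\ge 3e$ (from $X\cdot C_0\ge 0$, $X$ being irreducible) already pin down $e\in\{0,2,4\}$. It would be cleaner to state it that way and drop the Maroni identification, which otherwise requires a word of justification.
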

\begin{proof}
Assume the existence of a trigonal curve $X\in \h{H}_{15,14,5}$. Let $R$ be the trigonal line bundle on $X$ and let $m$ be the Maroni invariant of $X$, i.e. the first integer such that $h^0(R^{\otimes (m+2)}) \ge m+4$ (\cite[eq. 1.2]{ms}). By \cite[eq. 1.1]{ms},  $(14-4)/3<4\le m \le (14-2)/2=6$. Since $m\ge 4$, $h^0( R^{\otimes 5})=6$ and $R^{\otimes 5}$ is compounded. Since $\h{O}_X(1)$ is very ample, $\Oo_X(1) \ne R^{\otimes 5}$.

\begin{itemize}
\item[(a)]
 Assume that $u_X$ is induced by $|R|^{\otimes 3}$, i.e. that $b=2$ and $\deg (u_X) =3$. Thus $\Oo_X(1)\cong K_X\otimes (R^{\otimes 3})^\vee(-B)$ for some $B=p+p'\in X_2$.
Let $B'\in X_2$ be such that $p'+B'\in |R|$; $\Oo_X(1)(-B') \cong K_X\otimes (R^{\otimes 4})^\vee(-p)$.  Since $m\ge 4$, $h^0(R^{\otimes 4})=5$ and hence $h^0(K_X\otimes (R^{\otimes 4})^\vee)) =6$ and therefore $h^0(\Oo_X(1)(-B'))
= h^0(K_X\otimes (R^{\otimes 4})^\vee(-p))\ge 5$, contradicting the very ampleness of $\Oo_X(1)$.
\item[(b)]
Now assume that $u_X$ is not  induced by $g^1_3$. Thus the dual curve has degree $11-b$
and $K_X(-1)(-B)$ is not a multiple of  $g^1_3$, contradicting \cite[Prop. 1]{ms} and the fact that $\h{O}_X(1)$ is not a multiple of  $g^1_3$.
\end{itemize}
\end{proof}
\begin{lemma}\label{a2}
$u_X$ is birational onto its image.
\end{lemma}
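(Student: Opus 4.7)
The plan is to suppose for contradiction that $k := \deg u_X \geq 2$ and derive a violation of either the very ampleness of $\Oo_X(1)$ or of Proposition~\ref{a1}. Factor $u_X$ as
\[
X \xrightarrow{\pi} X' \xrightarrow{\nu} C \subset \PP^3,
\]
with $\pi$ of degree $k$ and $\nu$ the normalization of $C$. Setting $L' := \nu^*\Oo_C(1)$, we have $\pi^*L' \cong K_X(-1)(-B)$ as line bundles on $X$, and the pullback of the hyperplane sections of $C$ provides a $4$-dimensional base-point-free subspace of $H^0(X', L')$, so $h^0(L') \geq 4$ and $\deg L' = (11-b)/k$.

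Since $\deg L' \leq 11/2 < 6$, Clifford's theorem forces $L'$ to be non-special on $X'$. Riemann-Roch then yields $g' := g(X') = \deg L' - h^0(L') + 1 \leq \deg L' - 3$. Non-degeneracy of $C$ in $\PP^3$ gives $\deg L' \geq 3$, so $(11-b)/k \geq 3$; together with $g' \geq 0$ this restricts $k \in \{2, 3\}$. In the case $k = 3$ the divisibility and inequalities force $(b, \deg L', g') = (2, 3, 0)$, so $C$ is a twisted cubic, $X' \cong \PP^1$, and $\pi : X \to \PP^1$ exhibits $X$ as trigonal, contradicting Proposition~\ref{a1}.

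It remains to rule out $k = 2$. The possible triples are $(b, \deg L', g') \in \{(1, 5, 2), (3, 4, 1), (5, 3, 0)\}$. Writing $\pi_*\Oo_X = \Oo_{X'} \oplus N^{-1}$ with $\deg N = 15 - 2g'$, pick a general $P \in X'$ whose fiber $\pi^{-1}(P) = \{p, q\}$ consists of two distinct points, so that $p + q = \pi^*P$ as divisors on $X$. Since $L'$ is non-special of degree $g' + 3$, the line bundle $L' + P$ is also non-special, giving $h^0(L' + P) = h^0(L') + 1 \geq 5$. A direct degree check shows $\deg(L' + P - N) = 3g' - 11 < 0$ in all three subcases, so by the projection formula
\[
h^0\bigl(\pi^*(L' + P)\bigr) = h^0(L' + P) + h^0(L' + P - N) = h^0(L' + P) \geq 5.
\]
Combining with $B \geq 0$ and Riemann-Roch on $X$, we obtain
\[
h^0\bigl(\Oo_X(1)(-p-q)\bigr) = h^0\bigl(K_X(-1) + p + q\bigr) = h^0\bigl(\pi^*(L' + P) + B\bigr) \geq 5,
\]
contradicting the very ampleness of $\Oo_X(1)$, which demands $h^0(\Oo_X(1)(-p-q)) = h^0(\Oo_X(1)) - 2 = 4$ for distinct $p, q$.

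The main technical ingredient is the separating-fibers inequality above; the rest reduces to standard Clifford and Riemann-Roch bookkeeping on $X$ and $X'$, together with the earlier observation that $|K_X(-1)(-B)|$ is complete of dimension $3$, which is what pins down $h^0(L') = 4$ and $g' = \deg L' - 3$.
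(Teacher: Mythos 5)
Your proof is correct, and its engine is the same as the paper's: the degree-$3$ case is reduced to a trigonal pencil and killed by Proposition~\ref{a1}, while in the degree-$2$ cases you add one point of the image curve, pull back, and show that the two points of a fibre of $u_X$ are not separated by $|\Oo_X(1)|$ — exactly the mechanism behind the paper's computation that $h^0(\Oo_X(1)-u_X^*(p))\ge 5$. The genuine difference is in the bookkeeping and in the completeness of the case list. The paper enumerates only $(b,\deg (u_X),\deg (C))=(1,2,5)$ and $(2,3,3)$, leaving aside the numerically admissible possibilities $(3,2,4)$ and $(5,2,3)$ (presumably via an unstated bound on $\deg B$, which is only ``recalled'' later in the proof of Proposition~\ref{b1}); your Clifford plus Riemann--Roch analysis on the normalization $X'$, with $h^0(L')=4$ pinned down by injectivity of $\pi^*$ into the complete $3$-dimensional system $|K_X(-1)(-B)|$, treats all three $k=2$ subcases uniformly, so on this point your argument is actually more complete than the printed one, and it also avoids the paper's intermediate identification of the image in the case $(1,2,5)$ as a smooth quintic of genus $2$. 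One small simplification: the projection-formula step with $\pi_*\Oo_X=\Oo_{X'}\oplus N^{-1}$ is more than you need, since injectivity of $\pi^*$ on global sections already gives $h^0(\pi^*(L'+P))\ge h^0(L'+P)\ge 5$, which is all the contradiction with very ampleness requires.
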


\begin{proof}
Suppose $u_X$ is not birational onto its image.
Since $11$ is a prime and $C=u_X(X)$ spans $\PP^3$, we have the following two cases:
$$\begin{cases}
b=1, \deg (u_X)=2, \deg (C)=5\\

b=2, \deg (u_X)=3, \deg (C)=3; \mathrm{this ~case ~is ~excluded ~by ~Proposition ~\ref{a1}.}
\end{cases}$$
Now assume $\deg (u_X)=2$ and hence $\deg (C) =5$. 
Since $|K_X(-1)(-B)|$ is complete, $C$ has geometric genus $h=2$, smooth and $|\h{O}_C(1)|=g^3_5$ is non-special. 
Since $|K_X(-1)|=|u_X^*\h{O}_C(1)+B|$,
for any $p\in C$, we have 
\begin{align*}
|K_X(-1)+u_X^*(p)|&=|u_X^*(\h{O}_C(1))+B+u_X^*(p)|=|u_X^*(\h{O}_C(1)\otimes\h{O}_C(p))+B|\\&=|u_X^*(g^4_{6})+B|=g^{s}_{13}, ~~~s\ge 4
\end{align*}
hence 
\begin{align*}
|\h{O}_X(1)-u_X^*(p)|&=|K_X-K_X(-1)-u_X^*(p)|=|K_X- (K_X(-1)+u_X^*(p))|\\&=|K_X-g^s_{13}|=g^s_{13},
\end{align*}
and therefore $|\h{O}_X(1)|$ is compounded, a contradiction. 

\end{proof}

\begin{remark}\label{a3}
Assume  $b=\deg B>0$ and $\deg (u_X)=1$, i.e. assume $\deg (u_X(X))=11-\deg B\le 10$. Since $u_X(X)=C$ has geometric genus $g=14$ and $\pi_1(11-b,3)<g$, $C$ is contained in a unique quadric surface (\cite[Th. 3.13]{he}).
\end{remark}

\section{Dual curves contained in a quadric surface $Q\subset\PP^3$}
We assume that for $X\in\h{H}_{15,14,5}$ the dual curve $u_X(X)=C\subset\PP^3$ lies on a smooth quadric surface
$Q$.  Without loss of generality we may  assume that $C\in |\Oo_Q(a,11-b-a)|$ with $a \le 11-b-a$, i.e. $2a\le 11-b$. Proposition \ref{a1} implies $a\ge 4$ and hence $a=4$ if $b=2$ and $a\in \{4,5\}$ if $b\in \{0,1\}$.
Note that  $$\dim  |\Oo_Q(a,11-b-a)| =(a+1)(12-b-a) -1$$ and $$p_a(C) =(a-1)(10-b-a).$$ By Remark \ref{oo1},  in each case $a\in\{4,5\}$, we get an irreducible family whose general element is a nodal curve. We may also assume that the nodes are general in $Q$. Such a family has dimension
\begin{align*}\dim & |\Oo_Q(a,11-b-a)|-(p_a(C)-g)+\dim\PP(H^0(\PP^3,\h{O}(2))+b-\dim\textrm{Aut}(\PP^3)\\
&=(a+1)(12-b-a) -1-((a-1)(10-b-a)-14)+9+b-15\\
&=-b+29<29=\lambda(d,g,r)=3g-3+\rho(d,g,r); \textrm{ } (d,g,r)=(11,14,3),
\end{align*}
if $b>0$ 
and hence the family of such $X$'s does not constitute a full  component; Remark \ref{new1}(iii). 

Therefore we have $b=0$. 
Let $C\in|\Oo_Q(a,11-a)|$ with $\delta = p_a(C)-g$ nodes. Let $\tilde{C}\stackrel{\pi}{\rightarrow} C\subset Q$ be the normalization of the nodal curve $C$. In the following we check   if $\h{O}_{\tilde{C}}(1)=\pi^*(\h{O}_{{\PP^3}}(1))$
has very ample  residual series $|K_{\tilde{C}}(-1)|$; if this is the case, we then may conclude
that the family of nodal curves $C\subset Q\subset\PP^3$ under consideration indeed comes from a family of smooth curves $X\subset\PP^5$.

Before proceeding we recall some standard notations concerning linear systems and divisors on a blown up projective plane. Let $\PP^2_s$ be the rational surface $\PP^2$ blown up at $s$ general points. Let $e_i$ be the class of the exceptional divisor
$E_i$ and $l$ be the class of a line $L$ in $\PP^2$. For integers  $b_1\ge b_2\ge\cdots\ge b_s$, let $(a;b_1,\cdots, b_i, \cdots,b_s)$ denote class of the linear system $|aL-\sum b_i E_i|$ on $\PP^2_s$.  By abuse of notation we use the  expression $(a;b_1,\cdots, b_i, \cdots,b_s)$ for the divisor $aL-\sum b_i E_i$ and $|(a;b_1,\cdots, b_i, \cdots,b_s)|$ for the linear system $|aL-\sum b_i E_i|$. We use the convention $$(a;b_1^{s_1},\cdots,b_j^{s_j},\cdots,b_t^{s_t}), ~ \sum s_j=s$$ when  $b_j$ appears $s_j$ times consecutively  in the linear system $|aL-\sum b_i E_i|$.

\begin{example}\label{ex1}
\begin{itemize}
\item[(i)] $C\in  |\Oo_Q(5,6)|$:  $C$ is a nodal curve with $\delta=p_a(C)-g=6$ nodes. 
Choose a node $q_0\in C$ and blow up $Q$ at $q_0$ - which we denote by $Q_{q_0}$ - and then blow up successively at the remaining five
nodes $q_3,\cdots ,q_7$ to get  $Q_{q_0,q_3, \cdots q_7}$. Note that $Q_{q_0}\cong\PP^2_2$ the two point blow up of the projective plane $\PP^2$ and hence $Q_{q_0,q_3, \cdots q_7}\cong \PP^2_7$. Under the identification $Q_{q_0}\cong\PP^2_2$, the exceptional divisor of the blow up $Q_{q_0}\rightarrow Q$ is the proper transformation of the line through two points in $\PP^2$ which are the images of the two rulings of $Q$ under the projection $Q-\{{q_0}\}\rightarrow\PP^2$. 

Let $\tilde{C}\in |(d;b_1,\cdots ,b_7)|\in \Pic(\PP^2_7)$, where $\tilde{C}$ is the smooth curve after resolving all the nodes of $C$. Since $q_0\in C\subset Q$ as well as the the remaining $5$ points $q_3,\cdots ,q_7 $ are double points, we have
$$(l-e_1-e_2)\cdot \tilde{C}=d-b_1-b_2=2, ~~b_j=e_j\cdot\tilde{C}=2, ~~j=3,\cdots ,7.$$  Note that 
$$\deg C=(2l-e_1-e_2)\cdot \tilde{C}=2d-b_1 -b_2=11$$
$$\tilde{C}\cdot (l-e_1)=a=5, ~~\tilde{C}\cdot (l-e_2)=11-a=6$$ and therefore 
$\tilde{C}\in |(9;4,3,2^5)|$
from which it follows
\begin{align*}
|K_{\tilde{C}}(-1)|&=|K_{\PP^2_7}+\tilde{C}-(2l-e_1-e_2)|_{|\tilde{C}}\\
&=|-(3;1^7)+(9;4,3,2^5)-(2;1^2,0^5)|=|(4;2,1^6)|_{|\tilde{C}}.
\end{align*}
We note that the restriction map $$\h{M}:=|K_{\PP^2_7}+\tilde{C}-(2l-e_1-e_2)|\longrightarrow |K_{\PP^2_7}+\tilde{C}-(2l-e_1-e_2)|_{|\tilde{C}}$$ is surjective;  by Kodaira vanishing theorem, $$h^1(\PP^2_7, \h{I}_{\w{C}}\otimes\h{M})=h^1(\PP^2_7, -(5;2^2,1^5))=0$$  since
$|(5;2^2,1^5)|$ is (very) ample.
Hence $|K_{\tilde{C}}(-1)|$ is completely cut out by the very ample  linear system $|(4;2,1^6)|$ on $\PP^2_7$ (by \cite{sandra}) and the very ampleness of 
 $|K_{\tilde{C}}(-1)|$ follows.

\item[(ii)] $C\in  |\Oo_Q(4,7)|$:  $C$ is nodal with $\delta=p_a(C)-g=4$ nodes. We carry out 
a similar computation as in (i).
Choose a node $q_0\in C$ and blow up $Q$ at $q_0$ - which we again denote by $Q_{q_0}$ - and then blow up successively at the remaining three 
nodes $q_3,\cdots ,q_5$ to get  $Q_{q_0,q_3, \cdots q_5}$. 

Let $\tilde{C}\in |(d;b_1,\cdots ,b_5)|\in \Pic(\PP^2_5)$, where $\tilde{C}$ is the smooth curve after resolving all the $\delta$ nodes of $C$. Since $q_0\in C\subset Q$ and the remaining $3$ points $q_3,\cdots ,q_5 $ are double points, we again have 
$$(l-e_1-e_2)\cdot \tilde{C}=d-b_1-b_2=2 ~~\mathrm{and}~~b_j=e_j\cdot\tilde{C}=2, j=3,\cdots ,5.$$
Note that 
$$\deg C=(2l-e_1-e_2)\cdot \tilde{C}=2d-b_1-b_2=11$$ 
$$\tilde{C}\cdot (l-e_1)=4, ~~\tilde{C}\cdot (l-e_2)=7$$ and therefore
$\tilde{C}\in |(9;5,2^4)|.$

\vni
We set 
\begin{align*}\h{M}&:=|K_{\PP^2_5}+\tilde{C}-(2l-e_1-e_2)|\\
&=|-(3;1^5)+(9;5,2^4)-(2;1^2,0^3)|=|(4;3,0,1^3)|
\end{align*}
and consider the exact sequence
$$0\rightarrow \h{O}(-\tilde{C}+\h{M})\rightarrow\h{O}(\h{M})\rightarrow\h{O}(\h{M})_{|\tilde{C}}\rightarrow 0 $$
Note that 
$\h{L}:=\h{O}(-\tilde{C}+\h{M})=\h{O}(-(5;2^2,1^3))$ and $\h{L}^{-1}=\h{O}((5;2^2,1^3))$ is ample (indeed very ample) and hence
$$h^1(\PP^2_5,\h{O}(-\tilde{C}+\h{M}))=0$$
by Kodaira vanishing theorem. 
By the surjectivity of restriction map 
$$H^0(\PP^2_5,\h{O}_{\PP^2_5}(K_{\PP^2_5}+\tilde{C}-(2,1^2,0^3))\rightarrow H^0(\tilde{C}, K_{\tilde{C}}(-1)),$$
$|K_{\tilde{C}}(-1)|$ is completely cut out on $\tilde{C}$ by $|K_{\PP^2_5}+\tilde{C}-(2l-e_1-e_2)|$.

Note that the linear system $\h{M}=|(4;3,0,1^3)|$ contracts the exceptional divisor $e_3$,  whereas 
$e_3\cdot\tilde{C}=2$. Hence the morphism induced by $\h{M}$  on $\PP^2_5$ and then restricted to $\tilde{C}$ produces 
a singularity on the image curve.
In sum, we conclude that the normalization $\w{C}$ of $C\in  |\Oo_Q(4,7)|$ does not have a smooth counterpart in $\PP^5$ (or very ample residual series $|K_{\tilde{C}}(-1)|$)  and hence does not contribute to a component of $\h{H}_{15,14,5}$. 
\end{itemize}
\end{example}

\begin{remark} (i) If a dual curve of a smooth $X\in\h{H}_{15,14,5}$ lies on a quadric cone, such curves are in the boundary of the component corresponding to the family of curves $ C\in|\Oo_Q(5,6)|$ of geometric genus $g=14$ on  a smooth quadric surface $Q\subset\PP^3$;  recall that  curves on a quadric cone is specialization of curves on a
smooth quadric by \cite{Zeuthen}; see also \cite[Introduction]{Brevik}.

(ii) We take $|L|=|\Oo_Q(5,6)|$  in Remark \ref{oo1}. We have
$$\dim\Sigma_{14}=\dim|\Oo_Q(5,6)|+14-p_a(C)=35,$$
hence the irreducible  family of smooth curves in $\PP^5$ of degree $15$ and genus $g=14$ consisting of image curves of the morphism $\w{C}\hookrightarrow\PP^5$ induced by $|K_{\w{C}}(-1)|$  has dimension 
$$\dim\Sigma_{14}-\dim\mathrm{Aut}(Q)+\dim\mathrm{Aut}(\PP^5)=64,$$
which may well constitute a component of $\h{H}_{15,14,5}$.
\end{remark}


\section{The final step of the proof of Theorem \ref{main}}

In this section we finish the proof of Theoerm \ref{main}.
Now we consider the case in which $h^0(\PP^3,\Ii_C(2)) =0$. By Remark \ref{a3}, $b=0$ and $\deg (C)=11$. Recall that $C$ has geometric genus $g=14$ and hence $p_a(C) \ge 14$. 
\vskip 8pt
 (a) Let $C\subset\PP^3$ be non-degenerate curve not contained in a cubic surface. Since $\deg (C)> 3^2$ and by \cite{gp2}, a general hyperplane section $H\cap C=\Delta$ of $C$ is not contained in a plane cubic. 
One then easily computes 
$$h^1(\h{I}_{\Delta}(1)) =8, h^1(\h{I}_{\Delta}(2)) =5, h^1(\h{I}_{\Delta}(3)) =1,  h^1(\Ii_{\Delta}(t)) =0 \textrm{ for all }t\ge 4.$$
By  \cite[Cor. 3.2]{he} one knows 
$$ p_a(C)\le \sum d-h_\Delta(n)=\sum h^1(H,\h{I}_\Delta (n))$$ where $h_\Delta(n)$ is the Hilbert function of $\Delta$. Hence $p_a(C)=14$ and therefore $C$ is smooth. Furthermore $C$ is arithmetically Cohen-Macaulay (ACM for short)   by \cite[Remark 3.1.1]{he}.  In particular  $h^0(\Ii_C(4)) =h^0(\PP^3,\h{O}(4))-h^0(C,\h{O}_C(4))= 4$. 

By \cite[Prop. 3.1]{ps}, $C$ is directly linked to a ACM curve $D$ of degree $5$ and $p_a(D)=2$ by a complete intersection of two general quartics containing $C$ (i.e. a general pencil of quartics containing $C$ which may be regarded as a general member
of the Grassmannian $\mathbb{G}(1,\PP(H^0(C,\Ii_C(4))))$.

Conversely let $\h{E}$ be the family consisting of all ACM curves $D\subset\PP^3$ of degree $5$ and $p_a(D)=2$. It is known that $\h{E}$ is irreducible of dimension $4\cdot 5=20$ and a general element of $\h{E}$ is  smooth. 
For every $D\in \h{E}$, $$h^0(D,\Ii_D(4)) = h^0(\PP^3,\h{O}(4))-h^0(D,\h{O}(4)=35-(20-2+1)=16$$ and $D$ is directly linked to an ACM curve $C$ with $p_a(C)=14$ and degree $11$. 

Therefore we may consider the locus
$$\Sigma\subset\mathbb{G}(1,\PP(H^0(\PP^3,\h{O}(4))))=\mathbb{G}(1,34)$$
of pencils of quartic surfaces whose base locus consists of a curve $C$ of degree $d=11$ and genus $g=14$ and a quintic $D$ of genus $h=2$ where $C$ and $D$ are directly linked via a complete intersection of quartics corresponding to the pencil, together with  two obvious maps 
\[
\begin{array}{ccc}
\hskip -48pt\mathbb{G}(1,34)\supset\Sigma&\stackrel{\pi_C}\dashrightarrow
\quad\Gamma\subset \hskip 3pt\h{H}_{11,14,3}\\
\\
\dashdownarrow\vcenter{\rlap{$\scriptstyle{{\pi_D}}\,$}}
\\ 
\\
\h{E}=\h{H}_{5,2,3}^{\tiny{\textrm{ACM}}},
\end{array}.
\]
and the two dotted arrows are dominant maps.

As we have seen, $\pi_D$ is generically surjective with fibers open subsets of $$\mathbb{G}(1,\PP(H^0(D,\Ii_D(4))))=\mathbb{G}(1,15)$$
where $h^0(D,\Ii_D(4)) =16.$ Since $\h{E}$ is irreducible, it follows that  $\Sigma$ is irreducible and $$\dim\Sigma=\dim\mathbb{G}(1,15)+\dim\h{E}=28+4\cdot 5=48.$$ On the other hand, since every $C\in\Gamma$ lies on exactly $4$ independent quartics, $\pi_C$ is generically surjective with fibers open in $\mathbb{G}(1,\PP(H^0(\Ii_C(4)) )=\mathbb{G}(1,3)$. Finally it follows that the family $\Gamma$ is irreducible of dimension
$$\dim\Sigma-\dim\mathbb{G}(1,3)=44=4\cdot11, $$ 
which in turn implies that the corresponding family consisting of dual curves of $C\in\Gamma$ (i.e. curves induced by the residual series $|K_C(-1)|$) forms an
irreducible family of smooth curves in $\PP^5$ of degree $15$ and genus $g=14$ of dimension
$$\dim\Gamma-\dim\mathrm{Aut}(\PP^3)+dim\mathrm{Aut}(\PP^5)=64$$
under the condition that $|K_C(-1)|$ is very ample for a general $C\in\Gamma$. 
\vskip 8pt
In the  example below -- which is due to Angelo Lopez\footnote{The authors are grateful to Angelo Lopez for the example. } -- we will see that for a certain $C\in\Gamma$, $|K_C(-1)|$ is very ample and hence  is very ample for a general $C\in\Gamma$; cf. Remark \ref{oo3}.
\begin{example}\label{ex2}
We choose smooth $C\in\Gamma$ which lies on a smooth quartic surface $S$. 
Let $D$ be the quintic of genus $h=2$ directly linked to $C$ via complete
intersection of $S$ and another quartic $T$.  
Let $Q$ be a quadric (which may be chosen smooth) containing $D\in  |\h{O}_Q(2,3)|$ and let $E\in  |\h{O}_Q(2,1)|$ be the twisted cubic curve such that $D+E=S\cap Q\in |\h{O}_Q(4,4)|$.
Set $|H|=\mathcal{O}_S(1)$.

Note that $C\in |4H-D|$ and $D+E\in |2H|$. Since $S$ is a K3 surface, $K_S=0$ and hence
$$|K_C(-1)|=|K_S+C-H|_{|C}=|3H-D|_{|C}=|H+E|_{|C}$$

We claim that $|H+E|$ induces an embedding of $C$. Fix a degree $2$ zero-dimensional scheme $Z\subset C$. If $Z\cap E=\emptyset$ we just use that $E\cap C$ is effective and $|H|$ gives an embedding.
Now assume $Z\subset E\cap C$.
We consider the exact sequence 
\begin{equation}\label{eqbo1}0\rightarrow \h{O}_S(H)\rightarrow \h{O}_S(H+E)\rightarrow \h{O}_S(H+E)_{|E}\rightarrow 0.\end{equation}
We have $(H+E)\cdot E =1$ and hence $\Oo_S(H+E)_{|E}$ is a degree $1$ line bundle on $E\cong\PP^1$. Thus $\Oo_E(H+E)$ is very ample.
Since $h^1(\Oo_S(H))=0$,  we get that $Z$ imposes $2$ independent conditions on $|H+E|$ and hence on $|H+E|{_{|C}}$. Now assume $Z\cap E\ne \emptyset$ and $Z\nsubseteq E$. Write $Z\cap E=\{p\}$. The point $p$ is not a base point
of $\Oo_C(H+E)$, because $|H+E|_{|E}$ has no base point. Since $H$ is globally generated, there is $M\in |\Oo_S(H)|$ such that $Z\nsubseteq M$. Since $Z\nsubseteq E$, $Z$ imposes $2$ independent conditions on $|\Oo_C(H+E)|$.\end{example}
\quad (b) We assume that $C$ is contained in an irreducible cubic surface $S$. Since $\deg (C)=11>9$, such cubic surface is unique. Since $h^0(\PP^3,\h{I}_C(2)) =0$, we get $p_a(C)\le\pi_1(11,3)=15$.
Thus either $C$ is smooth or it has a unique singular point, which is either an ordinary node or an ordinary cusp. There are several possibilities for the cubic surface $S$ containing $C$.
\begin{itemize}
\item[(i)] $S$ is a cubic ruled surface which is projection of a rational normal surface scroll $S'\subset\PP^4$ from a point $p\notin S'$.
\item[(ii)] $S$ is a projection of a cone $S''\subset\PP^4$ over a twisted cubic curve in $\PP^3$ from $p\notin S''$, i.e. $S$ is a cone over a singular plane cubic which has a double line. 
\item[(iii)]$S$ is a cone over a non-singular plane curve.
\item[(iv)] $S$ is a non-singular cubic surface. 
\item[(v)] $S$ is  a singular and normal surface with isolated singularities.

\end{itemize}
We remark that the first two cases (i) and (ii) may be eliminated from our consideration just because 
the morphism $u_X: X\rightarrow S\subset\PP^3$ with $C$ as its image is induced by a complete linear system; note that the morphism $u_X$ lifts to a morphism $X\rightarrow S' (\mathrm{ ~~or~~ } S'' )\subset \PP^4$ then followed by an external projection into $\PP^3$ which cannot be induced by a complete linear series.

In the third case (iii) we need a dimension count as follows. Fix a smooth cubic curve $E\subset\PP^2$ and 
a triple covering $X\stackrel{\pi}{\longrightarrow} E$ such that $$|K_X(-1)|=|\pi^*(\h{O}_E(1))+B|=g^3_{11},  B\in X_2.$$
Choice of $E$  depends on $\dim\h{M}_1+\dim W^2_3(E)=2$ parameters.
Choice of a triple covering $X$ over  a fixed $E$ depends on $2g-3$ parameters by Riemann's moduli count; \cite [Theorem 8.23, p 828]{ACGH2}.
Choice of degree two effective divisors $B\in X_2$ depends on at most one 
parameter; note that $|\pi^*(\h{O}_E(1))+B|=g^2_{11}$ for general $B\in X_2$.
However these numbers do not add up the minimal possible dimension $\lambda(15,14,5)=29$ which is necessary to constitute
a component.

In the case (iv), we first assume $C=u_X(X)$ is smooth.  By \cite[Prop. B1]{gp1} the family of smooth curves $C\subset S$ of degree $d=11$ has dimension at most $d+g+18=43< 44$ and therefore does not constitute a full component. If the image curve $u_X(X)=C$ is singular (and hence $p_a(C)=15$), such $C$ forms a family of positive codimension inside a family generically consisting of smooth curves of degree $11$ and of genus $g=p_a(C)=15$, which forms a family of dimension $11+p_a(C)+18=44$; cf. Remark \ref{2.3}.





In the last case (v), note that $C=u_X(X)$ can be either singular or smooth. However, in both cases, the family 
consisting of curves on normal $S$ has dimension strictly less than $11+p_a(C)+18\le44$ by \cite{Brevik} hence does not constitute a full component. 

\begin{conclusion}
We now have exhausted all the possibilities for the dual curve $C\subset\PP^3$, therefore we conclude that 
there are only two irreducible components of $\h{H}_{15,14,5}$; 

(i) one component generically consisting of curves 
whose dual curve $C$ is a nodal curve in $|\h{O}_Q(5,6)|$ and 

(ii) the other component generically consisting of image curves of the morphism $C\hookrightarrow\PP^5$ induced by $|K_C(-1)|$ where $C\subset\PP^3$ is  directly linked to a quintic of genus $h=2$.

Since both families have the same dimension $64$, one is not in the boundary of the other. 
\end{conclusion}


\begin{example}\label{nocom}
There is a  singular curve $C\subset\PP^3$ with $(d,g)=(11,14)$ lying on a smooth cubic surface 
such that $|K_{\widetilde{C}}(-1)|$ is very ample, where $\w{C}\longrightarrow C$ is the normalization.  Therefore the image curve $X$ of $\w{C}$ under $\w{C}\stackrel{\tiny{|K_{\w{C}}(-1)|}}{\longrightarrow}X\subset\PP^5 $ is a curve with the right degree and genus $(d,g)=(15,14)$. However such curves does not constitute a full component as we remarked in the last stage of the proof of the main theorem. 
\vni
Let $S\subset\PP^3$ be a smooth cubic surface. Take
$C\in |(10;4,3^5)|$; $p_a(C)=15$ and assume that $C$ has one node and no further singularities. By blowing up the only node of $C$, the proper transformation of $C$ is $\w{C}\in|(10;4,3^5,2)|$ and 
$$|K_{\w{C}}(-1)|=|\w{C}+K_{\w{\PP^2_7}}-H|_{|\w{C}}=|(10;4,3^5,2)-(3;1^7)-(3,1^6,0)|=|(4;2,1^6)|$$
Note that $\dim|(4;2,1^6)|=14-3-6=5$ and $|(4;2,1^6)|$ is very ample on $\PP^2_7$ by \cite{sandra}.
Therefore $\w{C}\hookrightarrow X\subset\PP^5$ is an embedding with smooth image curve $X$ of 
degree $d=(10;4,3^5,2)\cdot(4,;2,1^6)=15$ and genus $g=p_a(\w{C})=p_a(C)-1=14$.
The dimension of the family of very ample $g^5_{15}$'s arising this way is
\begin{align*}
\dim|(10;4,3^5)|&-1+\dim\PP(H^0(\PP^3,\h{O}(3))-\dim\mathrm{Aut}(\PP^3)\\&=24+19-15=28<\lambda(15,14,5) =29,
\end{align*}
and therefore this family does not constitute a full component. 

By using Sakai's method \cite{Sakai} which is an effective numerical criteria for the gonality
of a plane curve with prescribed singularities, $\tilde{C}$ is $6$-gonal which is left to readers for  verification.  We also note that 
the curve $X\subset\PP^5$ lies on a surface of degree six since $(4;2,1^6)^2=6$.
\end{example}
\section{Gonality of a general element of a component of $\h{H}_{15,14,5}$}

Let $\Gamma_1$ (resp. $\Gamma _2$) be the irreducible component of $\h{H}_{15,14,5}$ whose general element has dual not contained (resp. contained) in a quadric surface.

\begin{proposition}\label{e2}
A general $X\in \Gamma _1$ is 7-gonal.
\end{proposition}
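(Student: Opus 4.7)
The plan is to prove $\gon(X) = 7$ for a general $X \in \Gamma_1$ via the gonality theory of curves on K3 surfaces. By what precedes, the dual curve $C \subset \PP^3$ of $X$ is smooth, ACM, of degree $11$ and genus $14$, directly linked to a smooth quintic $D$ of genus $2$ by a complete intersection of two quartics. Since $X \cong C$ as abstract curves, it suffices to prove $\gon(C) = 7$. By Example \ref{ex2} together with a Noether--Lefschetz argument, for a general $X$ I may assume $C \cup D$ lies on a smooth K3 quartic surface $S$ with Picard lattice $\Pic(S) = \ZZ H \oplus \ZZ D$, intersection form $H^2 = 4$, $H\cdot D = 5$, $D^2 = 2$, and $[C] = 4H - D$.

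For the upper bound $\gon(C) \le 7$, the hyperplane series $|H|_{|C}$ on $C$ is the $g^3_{11}$ realizing the embedding $C \hookrightarrow \PP^3$, with Clifford index $11 - 2\cdot 3 = 5$. By Knutsen's gonality formula for curves on K3 surfaces (equivalently, by the Green--Lazarsfeld theory of Clifford indices), one obtains $\gon(C) \le H\cdot C - H^2 = 11 - 4 = 7$: taking $M = H$, both $M$ and the residual $L - M = 3H - D$ are effective with $h^0(H) = 4$ and $h^0(3H-D) = 6$, and the minimum of $M\cdot L - M^2$ over such pairs in the Picard lattice is $7$. Concretely, a $g^1_7$ on $C$ is realized by the sub-pencil of $|H|_{|C}$ cut out by the pencil of hyperplanes through a $4$-secant line $\ell$ to $C$ in $\PP^3$, whose existence may be verified either by the Le~Barz multisecant formula on $\mathbb{G}(1,3)$ evaluated at $(d,g) = (11,14)$ or by exhibiting a concrete line on $S$.

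For the lower bound $\gon(C) \ge 7$, Propositions \ref{a1} and \ref{b1} already give $\gon(X) \ge 5$, so only $k = 5, 6$ must be excluded. A pencil $g^1_k$ on $C$ with $k \le 6$ has negative Brill--Noether number $\rho(14, 1, k)$, and by the Green--Lazarsfeld extension theorem for K3 curves such a pencil must come from an effective isotropic class $M = aH + bD \in \Pic(S)$ with $M^2 = 0$ and $0 < M\cdot C \le 6$. The condition $M^2 = 0$ reduces to the integer equation $2a^2 + 5ab + b^2 = 0$, whose discriminant is $17$, not a perfect square. Hence $M = 0$ is the only integer solution, no elliptic pencil on $S$ exists at all, and consequently $\gon(C) \ge 7$.

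The main obstacle is the Noether--Lefschetz assertion that for a general $X \in \Gamma_1$ the K3 surface $S$ has precisely the rank-two Picard lattice $\langle H, D\rangle$: this rules out spurious classes on $S$ that might produce small-degree pencils on $C$ or extra isotropic classes. Once this genericity is secured, the two bounds match and yield $\gon(X) = 7$.
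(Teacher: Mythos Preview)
Your approach is genuinely different from the paper's, which is a two-line citation of Hartshorne--Schlesinger \cite{hs}: for a general $X\in\Gamma_1$ the dual $C\subset\PP^3$ is a smooth ACM curve of degree $11$ not on a cubic, its homogeneous ideal is generated by quartics so $C$ has no $5$-secant line, while $4$-secant lines exist by \cite[Prop.~3.1]{hs}; then \cite[Thm.~1.3]{hs} gives $\gon(C)=11-4=7$ immediately. No K3 surface is used. Your upper bound via $4$-secant lines is essentially the same step the paper takes.

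Your lower-bound argument, however, has a real gap. The Green--Lazarsfeld theorem (and the refinements of Donagi--Morrison, Ciliberto--Pareschi, Knutsen) does \emph{not} say that a pencil $g^1_k$ with $\rho<0$ on a K3 curve must come from an isotropic class $M$ with $M^2=0$. What one actually gets is a line bundle $M$ on $S$ with $h^0(M)\ge 2$, $h^0(C-M)\ge 2$, and $\mathrm{Cliff}(C)=C\cdot M-M^2-2$; nothing forces $M^2=0$. So showing the rank-$2$ lattice $\langle H,D\rangle$ has no isotropic vectors is not sufficient. For example $M=2H$ gives $C\cdot M-M^2-2=22-16-2=4$, which would yield $\mathrm{Cliff}(C)\le 4$ and hence $\gon(C)\le 6$ unless you separately verify $h^0(C-2H)=h^0(2H-D)=1$, i.e.\ that $2H-D$ is an irreducible $(-2)$-curve --- and that verification already presupposes the Picard-rank-$2$ claim you flag as unproved. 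A correct K3 argument must run over \emph{all} classes $M=aH+bD$ satisfying the Green--Lazarsfeld constraints (one then finds the minimum of $C\cdot M-M^2-2$ is $5$, attained at $M=H$, giving $\gon(C)\ge 7$); your proof as written does not do this, and the Noether--Lefschetz input you left open is needed precisely to make that enumeration finite and tractable.
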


\begin{proof}
The dual curve $C$ of $X$ is a smooth degree $11$ ACM curve $C\subset \PP^3$ whose homogeneous ideal is generated by forms of degree $4$. Thus there is no line $J\subset\PP^3$ such that $\deg (J\cap C)\ge 5$.
There are lines $J$ such that $\deg (J\cap C)=4$ (\cite[Proposition 3.1]{hs}). Thus $C$ is 7-gonal (\cite[Theorem 1.3]{hs}).
\end{proof}


\begin{proposition}\label{b1}
No smooth element $X\in 
\h{H}_{15,14,5}$ is 4-gonal.
\end{proposition}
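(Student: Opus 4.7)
The plan is to argue by contradiction: assume some smooth $X \in \h{H}_{15,14,5}$ is $4$-gonal with base-point-free gonal pencil $|A|$ of degree $4$, and derive a contradiction by exploiting the decomposition $\h{H}_{15,14,5}=\Gamma_1\cup\Gamma_2$ from Theorem~\ref{main}. In each component I will produce on $X$ a second pencil whose combination with $|A|$ forces an impossibility.

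If $X\in\Gamma_2$, the dual curve $C=u_X(X)$ is a curve of class $\Oo_Q(5,6)$ on a smooth quadric $Q\subset\PP^3$ (the quadric cone being a boundary specialization treated identically). The two rulings of $Q$ pull back under the normalization $X\to C$ to base-point-free pencils on $X$ of degrees $5$ and $6$. Combining the degree-$5$ pencil with the hypothetical $4$-gonal pencil yields two coverings $X\to\PP^1$ of coprime degrees, admitting no common non-trivial factorization; the Castelnuovo-Severi inequality (Remark~\ref{CS}) then forces
$$g\le(4-1)(5-1)=12,$$
contradicting $g=14$.

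If $X\in\Gamma_1$, then by Lemma~\ref{a2} the dual map $u_X:X\to C\subset\PP^3$ is birational, and by Section~4(a) the generic such $C$ is a smooth ACM curve of degree $11$ and genus $14$ whose homogeneous ideal $\Ii_C$ is generated by its four independent quartics. I would first argue that no line $\ell\subset\PP^3$ meets $C$ in a scheme of length $\ge 5$: were it to, B\'ezout would force every quartic in $\Ii_C(4)$ to contain $\ell$ (any quartic not containing $\ell$ cuts it in exactly $4$ points), and since $\Ii_C$ is generated in degree $4$ this would place $\ell\subset C$, contradicting the irreducibility of $C$. Combined with the existence of $4$-secant lines to $C$ (\cite[Prop.~3.1]{hs}), the gonality formula \cite[Thm.~1.3]{hs} yields $\mathrm{gon}(X)=\mathrm{gon}(C)=11-4=7>4$.

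The main obstacle will be handling the degenerations inside $\Gamma_1$ in which $C$ has specialized onto a (possibly singular) integral cubic surface, i.e., subcases (iii)--(v) of Section~4(b). In each such configuration the surface geometry is expected to provide an additional low-degree covering on $X$ whose pairing with $|A|$ violates Castelnuovo-Severi: for instance, in case (iii), the cone structure yields a triple covering $X\to E$ onto a smooth plane cubic, and the coprime pair of degrees $(3,4)$ forces $g\le 3\cdot 1+(3-1)(4-1)=9<14$; the remaining subcases (iv)--(v) should follow analogously by exhibiting appropriate low-degree pencils coming from the explicit geometry of the cubic surface containing $C$.
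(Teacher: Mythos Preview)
Your argument has a structural gap: the proposition asserts that \emph{no} smooth $X\in\h{H}_{15,14,5}$ is $4$-gonal, but your reasoning only addresses the \emph{general} element of each component. Membership in $\Gamma_2$ does not force the dual curve to lie in $|\Oo_Q(5,6)|$ on a smooth quadric; that description holds only generically, and a special $X\in\Gamma_2$ may have $b>0$, a dual on a cone, or some other degenerate configuration. Likewise your $\Gamma_1$ argument (essentially Proposition~\ref{e2}) shows the \emph{general} dual is smooth ACM with gonality $7$, but says nothing about special $X\in\Gamma_1$. Since the $4$-gonal locus in $\h{M}_{14}$ is closed, showing that a general element of each $\Gamma_i$ is not $4$-gonal still leaves open a proper closed sublocus of possible $4$-gonal curves. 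Your treatment of the cubic-surface degenerations is also incomplete: case~(iii) works via Castelnuovo--Severi as you say, but cases~(iv)--(v) are left as ``should follow analogously'' without any pencil actually exhibited, and on a smooth cubic there is no obvious low-degree covering to invoke.

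The paper's proof bypasses the component decomposition entirely and instead performs a direct case analysis on the dual curve of an arbitrary $4$-gonal $X$: it splits on $\deg B\in\{0,1\}$ and on whether $C$ lies on a quadric. The cases where a $g^1_5$ is visible are disposed of by Castelnuovo--Severi as you do, but the case $\deg B=0$ with $C$ not on a quadric requires the Coppens--Martens structure theorem for linear series on $4$-gonal curves (forcing $g^3_{11}=|2g^1_4+F|$ and deriving a contradiction with very-ampleness), and the remaining quadric cases are handled by explicit blow-up computations showing $|K_{\tilde C}(-1)|$ fails to be very ample for the relevant Severi-generic nodal model, then invoking openness of very-ampleness (Remark~\ref{oo3}). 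These are the ingredients your approach is missing for the non-generic elements.
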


\begin{proof}
Recall that $\deg B\le 1$, where $B$ is the base locus of $|K_X(-1)|$.
We also recall that $u_X$ is birationally very ample or very ample by Lemma \ref{a2}.

We now assume the existence of a 4-gonal $X\in \h{H}_{15,14,5}$ with a unique $g^1_4$.
\begin{itemize}
\item[(i)] $\deg B=1$: $\phi$ is birationally very ample and by Remark \ref{a3}, $C$ is contained in a quadric $Q\subset\PP^3$. Assume that $Q$ is smooth.  If $C\in|\h{O}_Q(5,5)|$, $X$ has a base point free $g^1_5$ implying that $g\le (4-1)(5-1)=12$ by Castelnuovo-Severi inequality, a contradiction. 
If $C\in|\h{O}_Q(4,6)|$, $p_a(C)=15$ hence $C$ has a double point which is either a node or a cusp.
Let $q\in C\subset Q$ be the singular point and we blow up $Q$ at $q$ to get $Q_q\cong\PP^2_2$. Let $\w{C}\subset\PP^2_2$ be the proper transformation of $C$ and 
set $\w{C}\in|(d;b_1,b_2)|$. We carry a similar computation as we did before as follows. 
The exceptional divisor of the blow up is $l-e_1-e_2$. Since $q$ is a double point
$\w{C}\cdot (l-e_1-e_2)=d-b_1-b_2=2$. Also $\w{C}\cdot (2l-e_1-e_2)=2d-b_1-b_2=\deg C=10$,
$\w{C}\cdot (l-e_1)=d-b_1=4$,  $\w{C}\cdot (l-e_1)=d-b_2=6$
and hence we have $\w{C}\in (8;4,2)$.
We set 
\begin{align*}\h{M}&:=|K_{\PP^2_2}+\tilde{C}-(2l-e_1-e_2)|\\
&=|-(3;1^2)+(8;4,2)-(2;1^2)|=|(3;2,0)|
\end{align*}
and consider the exact sequence
$$0\rightarrow \h{O}(-\tilde{C}+\h{M})\rightarrow\h{O}(\h{M})\rightarrow\h{O}(\h{M})_{|\tilde{C}}\rightarrow 0 $$
Note that 
$\h{L}:=\h{O}(-\tilde{C}+\h{M})=\h{O}(-(5;2^2))$ and $\h{L}^{-1}=\h{O}((5;2^2))$ is ample (indeed very ample) and hence
$$h^1(\PP^2_5,\h{O}(-\tilde{C}+\h{M}))=0$$
by Kodaira vanishing theorem. 

By the surjectivity of restriction map 
$$H^0(\PP^2_2,\h{O}_{\PP^2_2}(K_{\PP^2_2}+\tilde{C}-(2,1^2))\rightarrow H^0(\tilde{C}, K_{\tilde{C}}(-1)),$$
$|K_{\tilde{C}}(-1)|$ is completely cut out on $\tilde{C}$ by $|K_{\PP^2_2}+\tilde{C}-(2l-e_1-e_2)|$. The morphism $Q_q\cong\PP^2_2\stackrel{|(3;2,0)|}{\hookrightarrow}\PP^6$ is a morphism contracting $e_2$ whereas $\w{C}\cdot e_2=2$. Hence $|K_{\w{C}}(-1)|$ is not very ample and so is the projection given by $|K_{\w{C}}(-1)-B|$ from the base point $B$. Note that $$|\h{O}_X(1)|=|K_X-K_X(-1)|=|K_{\w{C}}-(\h{O}_{\w{C}}(1)+B)|=|K_{\w{C}}(-1)-B|$$
via the identification  $X\cong\w{C}$. 

Assume $C$ lies on a quadric cone $Q$ with vertex $P$. Using the notation in Remark \ref{oo1}, let
$\mathbb{F}_2\longrightarrow Q\subset\PP^3$ be the morphism induced by $|h+2f|$. Let $\w{C}$ be the proper transformation of $C$ under the desingularization $\mathbb{F}_2\longrightarrow Q\subset\PP^3$ given by $|h+2f|$. Setting $\w{C}\in|ah+bf|$,
$\w{C}\cdot(h+2f)=(ah+bf)\cdot(h+2f)=b=10$, $0\le\w{C}\cdot h=-2a+b\le m$ where $m$ is the multiplicity of $C$ at the vertex $P$. Hence we have
\[
(a,b,m)=
\begin{cases}
(5,10,0) 
\\
(4,10,2). 
\end{cases}
\]
In the first case $(a,b,m)=(5,10,0)$, the ruling of the cone cut out a base point free $g^1_5$ which is impossible under the existence 
of a base point free $g^1_4$ by Casetelnuovo-Severi inequality.  In the case $(a,b,m)=(4,10,2)$, $p_a(\w{C})=15, \w{C}\in|4h+10f|$, and hence $\w{C}$ is singular. 
Choose $f_0$ the unique fibre containing the unique singular point $p_0\in\w{C}$. Blow up $\mathbb{F}_2$ at $p_0$ and let $e$ be the exceptional divisor of the blow up $\mathbb{F}_{2,1}\stackrel{\pi}{\rightarrow}\mathbb{F}_2$. Let $\tilde{f}_0$ ($\dbtilde{C}$ resp.) be the proper transformation of $f_0$ ($\w{C}$ resp.). By abusing notation, we denote 
$\pi^*(h) ~~\&~~ \pi^*(f)$ by $h ~~\&~~f$. We have
$$\dbtilde{C}\in|4h+10f-2e|$$
\begin{align*}
\h{M}&:=|\dbtilde{C}+K_{\mathbb{F}_{2,1}}-(h+2f)|\\&=|(4h+10f-2e)+(-2h-4f+e)-(h+2f)|\\&=|h+4f-e|
\end{align*}
 Since 
$\h{O}_{\mathbb{F}_{2,1}}(\h{M}-\dbtilde{C})=\h{O}_{\mathbb{F}_{2,1}}(-(3h+6f-e))$
and $f\cdot (\h{M}-\dbtilde{C})<0$, we see that $h^0({\mathbb{F}_{2,1}},\h{O}(\h{M}-\dbtilde{C}))=0$ implying that the restriction map 
$$\rho: H^0({\mathbb{F}_{2,1}},\h{O}(\h{M}))\longrightarrow H^0(\dbtilde{C},\h{O}(\h{M})\otimes\h{O}_{\dbtilde{C}})$$ is injective. 
Suppose $\rho$ is not surjective: 
$$\mathrm{Im}(\rho)\subsetneq H^0(\dbtilde{C},\h{M}\otimes\h{O}_{\dbtilde{C}}).$$
By the projection formula, $\pi_*\pi^*\h{O}_{\mathbb{F}_2}(h+4f))=\h{O}_{\mathbb{F}_2}(h+4f)$
 \begin{align*}
h^0(\mathbb{F}_{2,1},\pi^*\h{O}_{\mathbb{F}_2}(h+4f))&=h^0(\mathbb{F}_2, \pi_*\pi^*\h{O}_{\mathbb{F}_2}(h+4f)))\\&=h^0(\mathbb{F}_{2},\h{O}(h+4f))=8
\end{align*} 
(cf. \cite[Ex.2, p.53]{Beauville} for the last equality above) and hence
\begin{align*}h^0(\mathbb{F}_{2,1},\h{O}(\h{M}))&=h^0(\mathbb{F}_{2,1},\h{O}(h+4f-e))\\&=h^0(\mathbb{F}_{2},\h{O}(h+4f))-1=7.
\end{align*}
Then by the injectivity and the non-surjectivity of $\rho$, 
\begin{align*}r=\dim\PP(H^0(\dbtilde{C},\h{O}(\h{M})\otimes\h{O}_{\dbtilde{C}}))&>\dim\PP(H^0(\mathbb{F}_{2,1},\h{O}(\h{M}))\\&=\dim\PP(\mathrm{Im}(\rho))=6.
\end{align*}
Since $|\mathrm{Im}(\rho)|\subsetneq \PP(H^0(\dbtilde{C},\h{O}(\h{M})\otimes\h{O}_{\dbtilde{C}}))$ induces a morphism birational onto its image, the complete linear system $\PP(H^0(\dbtilde{C},\h{O}(\h{M})\otimes\h{O}_{\dbtilde{C}}))$ is still birationally very ample, which contradicts the Castelnuovo upper bound for the arithmetic genus of degree $\dbtilde{C}\cdot\h{M}=(4h+10f-2e)\cdot (h+4f-e)=16$ curves in $\PP^{r\ge 7}$; $\pi(16,7)=12<g=14$. Therefore we have 
$$\mathrm{Im}(\rho)= H^0(C,\h{O}(\h{M})\otimes\h{O}_{\dbtilde{C}})$$
and the restriction map $\rho$ is an isomorphism. 
Note that 
$$\tilde{f}_0^2=-1, \tilde{f}_0\cdot e=1,  h\cdot\tilde{f}_0=1,\dbtilde{C}\cdot \tilde{f}_0=2, 
\h{M}\cdot\tilde{f}_0=(h+4f-e)\cdot \tilde{f}_0=0.$$
Hence the morphism $\psi$ given by  $\h{M}=|\dbtilde{C}+K_{\mathbb{F}_{2,1}}-(h+2f)|$ contracts the $(-1)$ curve $\tilde{f}_0$ and the image $\psi (\dbtilde{C})\subset\PP^6$ acquires a singularity. Recall that $K_X(-1)=\h{O}_{\dbtilde{C}}(1)+B$. Hence 
\begin{align*}
\PP(H^0(\dbtilde{C},\h{O}(\h{M})\otimes\h{O}_{\dbtilde{C}}))&=|K_{\dbtilde{C}}(-1)|\\&=
|K_{\dbtilde{C}}-(K_X(-1)-B)|=|\h{O}_X(1)+B|.
\end{align*}
Since $\PP(H^0(\dbtilde{C},\h{O}(\h{M})\otimes\h{O}_{\dbtilde{C}}))=|\h{O}_X(1)+B|$ is not very ample, $\h{O}_X(1)$ is not very ample, a contradiction. 

\item[(ii)] $\deg B=0$ and $C$ is not contained in a quadric surface.


\quad (a) Suppose $K_X(-1)=g^3_{11}$ is very ample. Recall that a complete linear series $g^r_d$ on a $4$-gonal curve $C$ is of type 1 (type 2, resp.) if $g^r_d$ is composed of $g^1_4$ (if the residual series $|K_C-g^r_d|$ is composed of $g^1_4$, resp.) according to Coppens-Martens \cite{cm}. In the current situation, the linear series $K_X(-1)$ is neither type 1 nor type 2 and hence by \cite[Th. 1.9]{cm}, $K_X(-1)=g^3_{11}$ is of the form $|2g^1_4+F|$ with $\deg (F)=3$ i.e. there is $m\in \{0,2\}$
such that $g^3_{11} =|\frac{m+r-1}{2}g^1_4+F|$ and  $h^0(mg^1_4+F)=m+2$. Since we already avoided the case $C\in |\Oo_Q(4,7)|$ in Example \ref{ex1} (ii), $m\ne 0$ and therefore
$g^3_{11} =|2g^1_4+F|$. Write $F =p+F'$ with $\deg (F')=2$. Since $g^3_{11}=|2g^1_4+F|=|2g^1_4+F'+p|$ is very ample, we get $h^0(2g^1_4+p) = 2$, a contradiction.

\quad (b) Recall $p_a(C)\le 15$ by the assumption that $C$ is not contained in a quadric. Now assume $p_a(C)=15$. We exclude the case $g^3_{11} =|2g^1_4+F|$. Since $p_a(C)=15$ and $g=14$, $C$ can  have only one node or only one simple cusp (a double point). On the other hand,
since $g^3_{11} =|2g^1_4+F|$, $\deg F=3$, $F$ collapses under the morphism given by $|2g^1_4+F|$ onto a singular point of $C$ of multiplicity three which certainly cannot be a node or a simple cusp. 


\item[(iii)] Assume  $\deg B=0$ and $C$ contained in a smooth quadric $Q$.  Since we are assuming the existence 
of a $g^1_4$, $C\in|\h{O}_Q(5,6)|$ is impossible by the Casetelnuovo-Severi inequality.
In the case $C\in|\h{O}_Q(4.7)|$ we have $p_a(C)=18$ and
 $C$ may have rather bad singularities, not just nodes or cusps. Treating all the possible combinations 
of singularities on $C$ would be somewhat too much cumbersome. 
Instead, our strategy here is to show 
that a general curve in the Severi variety of curves of geometric genus $g=14$ on smooth $Q$ in the linear system $\h{L}=|\h{O}_Q(4.7)|$ does not have very ample $|K_{\w{C}}(-1)|$ where $\widetilde{C}$ is the normalization of  $C\subset Q\subset\PP^3$. Since being 
very ample is an open condition (Remark \ref{oo3}), this will show that there does not exist a smooth $4$-gonal curve $X\in\h{H}_{15,14,5}$ whose dual curve $C$ belongs to $|\h{O}_Q(4.7)|$, whatsoever the singularity of $C$ is.
On the other hand, we already showed that if $C\in|\h{O}_Q(4.7)|$ is general with four nodes, $\h{O}_X(1)=K_{\w{C}}(-1)$ is not very ample in Example \ref{ex1}(ii). 

\item[(iv)] Assume $\deg B=0$ and $C$ is contained in a quadric cone $Q$.
We consider the desingularization $\mathbb{F}_2\stackrel{\pi}{\longrightarrow} C\subset Q$ given by
$|h+2f|$.
Let $\w{C}\subset\mathbb{F}_2$ be the strict transformation of $C$ under $\pi$. Set
 $\w{C}\in|ah+bf|$ and we have
$\w{C}\cdot(h+2f)=(ah+bf)\cdot(h+2f)=b=11$, $0\le\w{C}\cdot h=-2a+b\le m$ where $m$ is the multiplicity of $C$ at the vertex $P$. 
Hence we have 
\[
(a,b,m)=
\begin{cases}
(5,11,1) 
\\
(4,11,3). 
\end{cases}
\]

The case $(5,11,1)$ is out of our consideration; the ruling of the cone cuts out a base point free $g^1_5$ which
is a contradiction while assuming the existence of $g^1_4$ by Castelnuovo-Severi inequality. 
If $\w{C}\in |4h+11f|$, by adjuction we have $p_a(\w{C})=18$. Again we adopt the same  strategy (as in the case $C\in|\h{O}_Q(4.7)|$ on a smooth quadric) to show 
that a general element in the Severi variety of curves of geometric genus $g=14$ on $\mathbb{F}_2$ in  $\h{L}=|4h+11f|$ does not have very ample $|K_{\dbtilde{C}}(-1)|$ where $\dbtilde{C}$ is the normalization of $\w{C}\subset\mathbb{F}_2$. The following is parallel to the case we already considered in the case $\deg B=1$, $(a,b,m)=(4,10,2)$ and $C$ lies on a quadric cone. However we provide some computations for the convenience of readers. 

We assume that $\w{C}\subset\mathbb{F}_2$ has four nodes as its only 
singularities in general position. Let $e_i (i=1,\cdots, 4)$ be exceptional divisors and let
$f_i (i=1,\cdots, 4)$ be fibers containing the four nodal points of $\w{C}$. After resolving
all the four nodes we get a smooth curve $\dbtilde{C}\subset\mathbb{F}_{2,4}$ on the Hirzebruch surface $\mathbb{F}_2$ blown up at four points.
We have
$$\dbtilde{C}\in|4h+11f-\sum2e_i|$$
\begin{align*}
\h{M}:&=|\dbtilde{C}+K_{\mathbb{F}_{2,4}}-(h+2f)|\\&=|(4h+11f-\sum2e_i)+(-2h-4f+\sum e_i)-(h+2f)|\\&=|h+5f-\sum e_i|
\end{align*}
Since 
$\h{O}_{\mathbb{F}_{2,4}}(\h{M}-\dbtilde{C})=\h{O}_{\mathbb{F}_{2,4}}(-(3h+6f-\sum e_i))$
and $f\cdot (\h{M}-\dbtilde{C})<0$, we see that $h^0({\mathbb{F}_{2,4}},\h{O}(\h{M}-\dbtilde{C}))=0$ implying that the restriction map 
$$\rho: H^0({\mathbb{F}_{2,4}},\h{O}(\h{M}))\longrightarrow H^0(\dbtilde{C},\h{O}(\h{M})\otimes\h{O}_{\dbtilde{C}})$$ is injective. 
Suppose 
$$\mathrm{Im}(\rho)\neq H^0(\dbtilde{C},\h{M}\otimes\h{O}_{\dbtilde{C}}).$$
Then 
$$\dim\PP(H^0({\mathbb{F}_{2,4}},\h{O}(\h{M})\otimes\h{O}_{\dbtilde{C}}))=\dim\PP(H^0(\dbtilde{C},\h{M}\otimes\h{O}_{\dbtilde{C}}))=r\ge 6.$$ 
Since $|\mathrm{Im}(\rho)|\subset \PP(H^0({\mathbb{F}_{2,4}},\h{O}(\h{M})\otimes\h{O}_{\dbtilde{C}}))$ induces a morphism birational onto its image, the complete linear system $\PP(H^0(\dbtilde{C},\h{O}(\h{M})\otimes\h{O}_{\dbtilde{C}}))$ is still birationally very ample, which contradicts the Castelnuovo upper bound for the arithmetic genus of degree $15$ curves in $\PP^{r\ge 6}$; $\pi(15,6)=13<g=14$. Therefore we have 
$$\mathrm{Im}(\rho)= H^0(C,\h{O}(\h{M})\otimes\h{O}_{\dbtilde{C}}).$$
Denoting by $\w{f}_i$  the proper transformation of $f_i$ under the blow up, 
$$\tilde{f}_i^2=-1, \tilde{f}_i\cdot e_i=1,  h\cdot\tilde{f}_i=1, \dbtilde{C}\cdot \tilde{f}_i=2, 
(h+5f-\Sigma e_i)\cdot \tilde{f}_i=0.$$
Hence the morphism $\psi$ given by $\h{M}=|\dbtilde{C}+K_{\mathbb{F}_{2,4}}-(h+2f)|$ contracts  $(-1)$ curves $\tilde{f}_i$ and the image curve $\psi (\dbtilde{C})\subset\PP^5$ 
acquires singularities.
Since the complete linear system $\mathrm{Im}(\rho)$ maps $\dbtilde{C}$ onto $\psi (\dbtilde{C})\subset\PP^5$ with at least $4$ singular points, 
$\h{M}\otimes\h{O}_{\dbtilde{C}}=|K_{\dbtilde{C}}(-1)|$ is not very ample.

\end{itemize}

\end{proof}

\section{Birationality of the moduli map}
Recall that  $\Gamma_1$ (resp. $\Gamma _2$) is the irreducible component of $\h{H}_{15,14,5}$ whose general element has a dual curve not contained (resp. contained) in a quadric surface.

Let $\mu _i: \Gamma_i\to \Mm_{14}$, $i=1,2$, denote the natural moduli map.
We first need the following easiest kind of super abundance lemma regarding configuration of points with respect to a certain linear system on a smooth quadric $Q\subset\PP^3$ and we omit its verification which is rather elementary. 
\begin{lemma}\label{aaa1}
Fix an integer $e$ such that $0\le e\le 6$ and  a set $A\subset Q$ such that $\#A=e$ and $h^1(\Ii_A(3,4)) \neq 0$. Then  either $e=6$ and there is $L_1\in |\Oo_Q(1,0)|$ such that $A\subset L_1$ or $e\in \{5,6\}$ and there is $L_2\in |\Oo_Q(0,1)|$ such that $\#(A\cap L_2)\ge 5$ and in the latter case there is $A'\subseteq A$
such that $\#A'=5$ and $h^1(\Ii_{A'}(3,4)) >0$.
\end{lemma}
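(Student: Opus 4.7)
The plan is to verify the lemma directly by a product-of-linear-forms separation argument on $Q\cong\PP^1\times\PP^1$.

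First I record the numerics. By K\"unneth, $h^0(Q,\h{O}_Q(3,4))=20$ and $h^i(Q,\h{O}_Q(3,4))=0$ for $i>0$, so for $A\subset Q$ with $\#A=e\le 6$ the vanishing of $h^1(\Ii_A(3,4))$ is equivalent to $A$ imposing $e$ independent conditions on $|\h{O}_Q(3,4)|$. Since $\h{O}_Q(3,4)|_{L_1}\cong\h{O}_{L_1}(4)$ and $\h{O}_Q(3,4)|_{L_2}\cong\h{O}_{L_2}(3)$, the presence of six collinear points on an $L_1\in|\h{O}_Q(1,0)|$ or five collinear points on an $L_2\in|\h{O}_Q(0,1)|$ immediately forces $h^1(\Ii_A(3,4))>0$. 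This proves the ``if'' direction and the final sentence of the lemma, since any five-point subset $A'\subseteq A\cap L_2$ already has $h^1(\Ii_{A'}(3,4))>0$.

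For the ``only if'' direction I would assume $\#(A\cap L_1)\le 5$ for every $L_1\in|\h{O}_Q(1,0)|$ and $\#(A\cap L_2)\le 4$ for every $L_2\in|\h{O}_Q(0,1)|$, and show that every point $P_i\in A$ is separable from $A\setminus\{P_i\}$ by a section of $\h{O}_Q(3,4)$. Fix $P_i$ and for each $P_j\in A\setminus\{P_i\}$ pick a linear form $\ell_j$ vanishing at $P_j$ but not $P_i$: the form is of type $(1,0)$ if $P_j$ shares its second coordinate with $P_i$, of type $(0,1)$ if it shares its first, and free otherwise. Let $k$ be the number of forced $(1,0)$-type and $m$ the number of forced $(0,1)$-type indices, so $k+1=\#(A\cap L_{2,i})\le 4$ and $m+1=\#(A\cap L_{1,i})\le 5$, where $L_{1,i},L_{2,i}$ denote the rulings through $P_i$. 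Distributing the free factors as desired, the product $\prod_{j\ne i}\ell_j$ can be arranged to have bidegree $(a,e-1-a)$ for any $a\in[k,\,e-1-m]$, and such a product lifts to a section of $\h{O}_Q(3,4)$ by multiplication with a suitable monomial in the complementary ruling whenever $a\le 3$ and $e-1-a\le 4$.

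A short check shows that the two intervals $[k,\,e-1-m]$ and $[\max(0,e-5),\,3]$ overlap for every $e\le 6$ under the standing hypothesis $k\le 3$, $m\le 4$; one needs only the two endpoint inequalities $k\le 3$ and $m\le\min(e-1,4)$, both supplied by the hypothesis. Hence $P_i$ is separable for every $i$, so $A$ imposes $e$ independent conditions, and $h^1(\Ii_A(3,4))=0$, completing the proof. The only obstacle is this finite interval-intersection bookkeeping, which is entirely elementary and is presumably the reason the paper omits the verification.
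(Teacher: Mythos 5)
Your argument is correct, and it supplies precisely the elementary verification that the paper omits (the authors only assert the check is ``rather elementary'' without giving it). The key numerics $\Oo_Q(3,4)|_{L_1}\cong\Oo_{L_1}(4)$ and $\Oo_Q(3,4)|_{L_2}\cong\Oo_{L_2}(3)$ are right, the separation of each $P_i$ by a product of ruling forms is sound, and the interval bookkeeping does close: the two forced sets are disjoint, so $k+m\le e-1$ and $[k,e-1-a]$ is nonempty, while $k\le 3$ and $m\le 4$ from the standing hypothesis give the overlap with $[\max(0,e-5),3]$.
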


\begin{lemma}\label{bo1}
Fix an integer $a$ such that $0\le a\le 6$. Let $G\subset Q$ be a general subset of $Q$ with cardinality $a$. Then $G$ is the set-theoretic  base locus of $|\Ii_G(2,2)|$.
\end{lemma}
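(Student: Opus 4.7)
The plan is to reduce the lemma to a cohomological statement: for general $G$ with $|G|=a\le 6$ and every $p\in Q\setminus G$, the union $G\cup\{p\}$ must impose independent conditions on $H^0(Q,\Oo_Q(2,2))$. Granting this, $h^0(\h{I}_{G\cup\{p\}}(2,2))=h^0(\h{I}_G(2,2))-1$ for every such $p$, so no $p\notin G$ can be a base point and the set-theoretic base locus of $|\h{I}_G(2,2)|$ is exactly $G$.

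Since $\Oo_Q(2,2)$ is very ample, the fact that a general $G$ of cardinality $a\le 6 < 8=\dim|\Oo_Q(2,2)|$ imposes $a$ independent conditions is standard; the real content is the same statement after adjoining an \emph{arbitrary} extra point $p\in Q$. My approach would be to classify the subsets $S\subset Q$ with $|S|\le 7$ such that $h^1(\h{I}_S(2,2))>0$. Using the Bezout numbers $\Oo_Q(1,0)\cdot\Oo_Q(2,2)=\Oo_Q(0,1)\cdot\Oo_Q(2,2)=2$ and $\Oo_Q(1,1)\cdot\Oo_Q(2,2)=4$, any $(2,2)$-form vanishing on at least $4$ points of a ruling line $L$, or on at least $6$ points of an irreducible $(1,1)$-curve $C$, must split off $L$ or $C$ as a component; a direct dimension count in the residual linear system ($|\Oo_Q(1,2)|$, $|\Oo_Q(2,1)|$, or $|\Oo_Q(1,1)|$) then forces $h^1(\h{I}_S(2,2))>0$ in each of these cases. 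A symmetric case analysis based on how many points of $S$ lie on distinguished subcurves shows that, for $|S|\le 7$, these two families exhaust the failure configurations.

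Finally, I would verify that $G\cup\{p\}$ avoids both bad configurations for general $G$ of cardinality $\le 6$ and any $p\in Q$. Three points on a common ruling line is a codimension-$2$ condition on a triple, hence generically no three points of $G$ are collinear on a ruling line and $G\cup\{p\}$ contains at most three points on any such line. Likewise, since $|\Oo_Q(1,1)|$ is $3$-dimensional, five points lying on a common $(1,1)$-curve is a codimension-$2$ condition, so a general $G$ carries no five points on a $(1,1)$-curve and $G\cup\{p\}$ carries at most five. Both failure configurations are excluded, so $G\cup\{p\}$ imposes independent conditions on $\Oo_Q(2,2)$, and $G$ is the set-theoretic base locus as claimed.

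The main obstacle is the completeness of the classification step: one must rule out subtle configurations of $\le 7$ points (for instance, points distributed between a ruling line and a $(1,1)$-curve) producing additional failures beyond the two listed. This parallels the case analysis of Lemma~\ref{aaa1} for $(3,4)$-forms, but now requires the symmetric treatment of the two rulings together with the possibility of an irreducible $(1,1)$-curve acting as an obstructing component.
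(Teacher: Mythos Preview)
Your reduction---showing $h^1(\Ii_{G\cup\{p\}}(2,2))=0$ for general $G$ of size $a\le 6$ and every $p\notin G$---is exactly the paper's starting point, including the reduction to $a=6$. From there the two arguments diverge.

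The paper does not classify failure configurations. Assuming $p$ is a base point, it picks any $3$-subset $E\subset G$ and observes that the reducible curve $C_E\cup C_{G\setminus E}$ lies in $|\Ii_G(2,2)|$, where $C_E$ is the unique $(1,1)$-conic through $E$ (smooth, and meeting $G$ exactly in $E$ by generality). Hence $p\in C_E$, say. The residual sequence along $C_E$,
\[
0\to \Ii_{G\setminus E}(1,1)\to \Ii_{G\cup\{p\}}(2,2)\to \Ii_{E\cup\{p\},C_E}(2,2)\to 0,
\]
has $h^1=0$ on both ends (on the right because $\#(E\cup\{p\})=4$ and $\deg\Oo_{C_E}(2,2)=4$ on $C_E\cong\PP^1$; on the left because three general points impose independent conditions on $|\Oo_Q(1,1)|$), giving $h^1(\Ii_{G\cup\{p\}}(2,2))=0$ and the contradiction in one stroke.

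Your classification route has a real gap, which you partly anticipate. The two listed failure modes do not exhaust the $7$-point sets $S$ with $h^1(\Ii_S(2,2))>0$: take three points on $L_1\in|\Oo_Q(1,0)|$, three on $L_2\in|\Oo_Q(0,1)|$, and one further point $c$. Every $(2,2)$-form through the first six contains $L_1\cup L_2$, so $h^0(\Ii_S(2,2))=h^0(\Ii_{\{c\}}(1,1))=3>2$, yet $S$ has no four points on a single ruling line and no six on an \emph{irreducible} $(1,1)$-curve. Dropping the word ``irreducible'' absorbs this example, but then you must (i) redo the genericity check---easy here, since a general $G$ has no two points on a common ruling line, so $G\cup\{p\}$ never carries three on each of two crossing lines---and (ii) actually prove that no further configurations remain. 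As written the argument hinges on an unproven exhaustiveness claim; the paper's single residual sequence avoids the classification altogether and is considerably shorter.
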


\begin{proof}
It is sufficient to check  for $a=6$. Assume the existence of $p\in Q\setminus G$ such that
$|\Ii_G(2,2)| = |\Ii_{G\cup \{p\}}(2,2)|$. Since $G$ is general, no two points among $G$ lie on a line in $Q$. For any $E\subset G$ such that $\#E=3$ the non-empty linear system $|\Ii_E(1,1)|$  has a unique element $C_E$ which  is smooth.
The generality of $G$ implies that $h^0(Q,\Ii _B(1,1)) =0$
for all $B\subset G$ such that $\#B =4$ and hence $G\cap C_E=E$. Likewise, we let
$C_{G\setminus E}$ be the uniqe conic in $|\Ii_{G\setminus E}(1,1)|$. 
Since
$G\subset C_E\cup C_{G\setminus E}$, $C_E\cup C_{G\setminus E} \in |\Ii_G(2,2)| = |\Ii_{G\cup \{p\}}(2,2)|$ and  therefore $p\in C_E\cup C_{G\setminus E}$. Assume for instance $p\in C_E$. On smooth $C_E\cong \PP^1$, $\deg (\Oo_{C_E}(2,2)) =4$ and hence $\dim|\Oo_{C_E}(2,2)|=4$. Furthermore the pencil $|\Ii_{E,C_E}(2,2)|\subset |\Oo_{C_E}(2,2)|$ on $C_E$ has the base locus $E$.
Consider the exact sequence
\begin{equation}\label{eqbo1}
0\to \Ii_{G\setminus E}(1,1)\to \Ii_{G\cup \{p\}}(2,2)\to \Ii_{E\cup \{p\},C_E}(2,2)\to 0
\end{equation}
Since $p\in Q\setminus G$ is not in the base locus $E$ of the pencil $|\Ii_{E,C_E}(2,2)|$, we have $h^1(C_E,\Ii_{E\cup \{p\},C_E}(2,2))=0$. Since $G\setminus E$ is general and $\#(G\setminus E)=3$, we have
$h^1(Q,\Ii_{G\setminus E}(1,1)) =0$. Therefore it follows from \eqref{eqbo1} that $h^1(Q,\Ii _{G\cup \{p\}}(2,2)) =0$, contradicting the assumption that $p$ is in the base locus of $|\Ii_G(2,2)|$.
\end{proof}

\begin{lemma}\label{aaa2}
Fix an integer $a$ such that $0\le a\le 6$ and take a general $G\subset Q$ such that $\#G=a$. Fix a set $A\subset Q$ such that $\#A =6$, $G\cap A=\emptyset$, $h^1(Q,\Ii_{G\cup A}(3,4)) >0$ and
$h^1(Q,\Ii _{G\cup A'}(3,4)) =0$ for all $A'\subsetneq A$. Then there is $L\in |\Oo_Q(1,0)|$ such that $A\subset L$.
\end{lemma}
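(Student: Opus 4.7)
My plan is to reduce everything to Lemma~\ref{aaa1} applied directly to $A$. I would proceed in two steps: first, use the generality of $G$ to force $h^1(\Ii_A(3,4))>0$; second, invoke Lemma~\ref{aaa1} (with $e=6$) and rule out its ``$(0,1)$-line'' alternative via the minimality hypothesis on $A$.

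For the first step I would argue by contraposition. Suppose $h^1(\Ii_A(3,4))=0$; then $h^0(\Ii_A(3,4))=14$. Since every member of the non-empty linear system $|\Ii_A(3,4)|$ is a curve of bidegree $(3,4)$, none of them equals $Q$, so the base locus of $|\Ii_A(3,4)|$ is a proper closed subset of $Q$. An easy induction, at each step choosing a point outside the current (proper) base locus, shows that a general subset of $a\le 6$ points of $Q$ imposes $a$ independent conditions on $|\Ii_A(3,4)|$. Hence $h^0(\Ii_{G\cup A}(3,4))=14-a$, and combining with $\chi(\Ii_{G\cup A}(3,4))=20-(a+6)=14-a$ forces $h^1(\Ii_{G\cup A}(3,4))=0$, contradicting the hypothesis. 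Therefore $h^1(\Ii_A(3,4))\ge 1$.

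For the second step I would apply Lemma~\ref{aaa1} to $A$ with $e=6$. Either $A$ is contained in some $L\in|\Oo_Q(1,0)|$ -- which is the desired conclusion -- or there exists $A''\subseteq A$ with $\#A''=5$ and $h^1(\Ii_{A''}(3,4))>0$. The latter cannot occur: since $G\cap A''\subseteq G\cap A=\emptyset$, twisting the short exact sequence
$$0\to\Ii_{G\cup A''}\to\Ii_{A''}\to\Oo_G\to 0$$
by $\Oo_Q(3,4)$ and using $H^1(\Oo_G)=0$ yields a surjection $H^1(\Ii_{G\cup A''}(3,4))\twoheadrightarrow H^1(\Ii_{A''}(3,4))$, so $h^1(\Ii_{G\cup A''}(3,4))\ge h^1(\Ii_{A''}(3,4))>0$. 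But $A''\subsetneq A$ since $5<6$, and then by the minimality hypothesis $h^1(\Ii_{G\cup A''}(3,4))=0$, a contradiction. Hence the first alternative must hold.

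The main technical point is Step~1. It rests on the basic observation that every linear system $|\Ii_{A\cup\{p_1,\ldots,p_i\}}(3,4)|$ appearing in the induction (for $0\le i\le a-1$, so that $h^0\ge 14-(a-1)\ge 8>0$) has a proper closed base locus in $Q$, which is automatic because $Q$ itself cannot be a fixed component of a linear system whose members are curves.
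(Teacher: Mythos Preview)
Your Step~2 is fine, but Step~1 has a genuine quantifier problem that breaks the argument. In the lemma, $G$ is fixed first (as a general subset of $Q$) and then $A$ is allowed to depend on $G$. Your induction, however, produces for each fixed $A$ a dense open set of $G$'s imposing independent conditions on $|\Ii_A(3,4)|$; this open set depends on $A$. You would need a single open set of $G$'s working simultaneously for \emph{all} six-point sets $A$ with $h^1(\Ii_A(3,4))=0$ and $G\cap A=\emptyset$, and no such set exists. Concretely, take $a=1$ and let $p\in Q$ be \emph{any} point, $L\in|\Oo_Q(1,0)|$ the ruling through $p$, and let $A$ consist of five points of $L\setminus\{p\}$ together with one general point $q\notin L$. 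Since $\Oo_Q(3,4)\cdot L=4$, any curve in $|\Ii_A(3,4)|$ must contain $L$; the residual system is $|\Ii_{\{q\}}(2,4)|$, giving $h^0(\Ii_A(3,4))=14$ and hence $h^1(\Ii_A(3,4))=0$. But every such curve passes through $p$, so $h^0(\Ii_{\{p\}\cup A}(3,4))=14$ as well, forcing $h^1(\Ii_{\{p\}\cup A}(3,4))=1>0$. Thus for \emph{every} $p$ there is an $A$ with $h^1(\Ii_A(3,4))=0$ yet $h^1(\Ii_{G\cup A}(3,4))>0$, contradicting what your Step~1 argument would yield.

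Of course this particular $A$ violates the minimality hypothesis (removing $q$ leaves six collinear points on $L$), so it does not contradict the lemma itself; but your Step~1 never invokes minimality, and that is precisely why it overreaches. The paper's proof is longer exactly because it must cope with this dependence of $A$ on $G$: it runs an induction on $a$, passes to residual sequences with respect to auxiliary curves in $|\Ii_{A'}(1,2)|$ and $|\Ii_{A_1}(1,1)|$, and ultimately reduces to the statement (Lemma~\ref{bo1}) that a general $G$ with $\#G\le 6$ is exactly the base locus of $|\Ii_G(2,2)|$ --- a generality condition on $G$ that is genuinely independent of $A$. If you want to rescue your approach you would have to weave the minimality hypothesis into Step~1 and find a uniform (in $A$) generality condition on $G$; short of that, the reduction to Lemma~\ref{aaa1} does not go through.
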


\begin{proof}
Since $h^1(\Ii _{G\cup A'}(3,4)) =0$ for all $A'\subsetneq A$, there is no $J\in |\Oo_Q(0,1)|$ such that $\#(A\cap J) \ge 5$. Thus the case $a=0$ is true by Lemma \ref{aaa1}. Thus we may assume $a>0$ and that the lemma is true for the integer $a-1$. Hence
either  there exists $L\in |\Oo_Q(1,0))|$ such that $A\subset L$ or $h^1(\Ii _{G'\cup A}(3,4))=0$ for all $G'\subsetneq G$. Thus from now on   we assume $h^1(\Ii_{G'\cup A}(3,4)) =0$ for all $G'\subsetneq G$. 
Fix $A'\subset A$ such that $\#A'=  5$. Set $\{p\}:=A\setminus A'$. Since $\#A'=5$, $|\Ii_{A'}(1,2)| \ne \emptyset$.
Take a general $T\in |\Ii_{A'}(1,2)|$. Consider the residual exact sequence of $T$: 

\begin{equation}\label{eqe0}
0\to \Ii_{G\cup  A\setminus (G\cup A)\cap T}(2,2)\to \Ii_{G\cup A}(3,4)\to \Ii_{T\cap (G\cup A),T}(3,4)\to 0
\end{equation}

\quad (a) First assume $p\notin T$. In this case we have $G\cup  A\setminus (G\cup A)\cap T \subseteq G\cup \{p\}$ and $T\cap (G\cup A')=T\cap (G\cup A)\subseteq G\cup A'$. Since $h^1(\Ii_{G\cup A'}(3,4)) =0$ by assumption, we have 
$h^1(T,\Ii_{T\cap (G\cup A),T}(3,4))=h^1(T,\Ii_{T\cap (G\cup A'),T}(3,4))=0$ from the exact sequence (\ref{eqe0}) adapted to the set $G\cup A'$. Hence by assumption $h^1(Q,\Ii_{G\cup A}(3,4))>0$, the long exact sequence on cohomology of the exact sequence of \eqref{eqe0} implies 
\begin{align*}h^1(Q,\Ii _{\{p\}\cup (G\setminus G\cap T)}(2,2))&=h^1(Q,\Ii _{G\cup A\setminus ((G\cup A)\cap T)}(2,2))\\&=h^1(Q,\Ii _{\{p\}\cup (G\cup A')\setminus ((G\cup A)\cap T)}(2,2)) >0.           
\end{align*}
 and hence $h^1(Q,\Ii_{G\cup \{p\}}(2,2)) >0$.
Since $G$ is general, $h^1(Q,\Ii_G(2,2)) =0$. Thus $p\notin G$ is in the base locus of $|\Ii_G(2,2)|$, contradicting Lemma \ref{bo1}. 

\bigskip

\quad (b) Now assume $p\in T$. In this case, $A\subset T$ and $G\cup  A\setminus (G\cup A)\cap T\subseteq G$. Since $G$ is general in $Q$, $\#G=a  \le h^0(\Ii_Q(2,2))=9$ we have 
$h^1(Q,\h{I}_G(2,2))=0$ and hence $h^1(Q,\Ii_{G\cup  A\setminus (G\cup A)\cap T}(2,2))=0$. Thus from \eqref{eqe0} and the assumption $h^1(Q,\Ii_{{{G}}\cup A}(3,4)) >0$ we have $h^1(T,\Ii_{T\cap (G\cup A),T}(3,4))>0$.  From the standard exact sequence of the restriction map,
\begin{equation*}\label{eqer}
0\rightarrow \h{I}_{G\cup A}(3,4)\rightarrow \h{I}_{{(G\cup A)}\cap T}(3,4)\rightarrow \h{I}_{{(G\cup A)}\cap T}(3,4)\otimes\h{O}_T\rightarrow 0
\end{equation*}
one has $h^1(Q,\Ii_{T\cap (G\cup A)}(3,4)) >0$ since
$h^1(T,\Ii_{T\cap (G\cup A),T}(3,4))>0.$
By the assumption $h^1(\Ii_{G'\cup A}(3,4)) =0$ for all $G'\subsetneq G$ and by
$h^1(Q,\Ii_{T\cap (G\cup A)}(3,4)) >0$, we have $T\cap G=G$ and $T\cap(G\cup A)=G\cup A\subset T$, hence $G\cup A$ is in the base locus $\Bb$ of $|\Ii_{A'}(1,2)|$. 
Therefore it follows that
$$0<h^0(Q,\Ii_{A'}(1,2))\le h^0(Q,\Ii_{G\cup A}(1,2))\le h^0(Q,\Ii_{G}(1,2))\le h^0(\Oo _Q(1,2)) -a.$$
For $\#G=6$, this is an obvious absurdity. 
 
 Now assume $\#G=a=5$.  In this case $|\Ii_G(1,2)|$ has a unique element  which is a twisted cubic $C\cong\PP^1$. Note that  $\deg \Ii_{G\cup A,C}(3,4)=\deg \Oo_C(3,4)-\deg \Ii_{G\cup A,C}=0$ and 
 we have  $h^1(C,\Ii_{G\cup A,C}(3,4)) =0$. 
 Since $h^1(Q,\Oo_Q(2,2)) =0$, the residual exact sequence \eqref{eqe0}  adapted to $C\subset Q$ gives a contradiction to the assumption $h^1(Q,\Ii_{G\cup A}(3,4)) >0$.
 \medskip
 
 (b1) Assume $a=4$.  Fix $A''\subset A$ such that $\#A''=4$ and take a general $T_1\in |\Ii_{A''}(1,2)|$. By step (a) applied to all $A'\subset A$ with $\#A'=5$, we may assume either $A\subset T_1$ or $A\cap T_1=A''$.

 \begin{itemize}
 \item[(b1.1)] First assume $A\subset T_1$ and hence $G\cup A \setminus (G\cup A)\cap T_1) \subseteq G$. 
Since $G$ is general, $h^1(Q,\Ii _G(2,2)) =0$ and hence $h^1(Q,\Ii_{G\cup A\setminus T_1\cap  (G\cup A)}(2,2)) =0$.
 
 Considering the long exact sequence on cohomology of \eqref{eqe0} with $T_1$ instead of $T$, we have $h^1(T_1,\Ii _{(G\cup A)\cap T_1}(3,4)) >0$. On the other hand from the 
long exact sequence on cohomology of the standard restriction map
 $$0\rightarrow \h{I}_{(G\cup A)}(3,4)\rightarrow\h{I}_{T_1\cap(G\cup A)}(3,4)\rightarrow \h{I}_{T_1\cap(G\cup A)}(3,4)\otimes\h{O}_{T_1}\rightarrow 0$$
 we have $h^1(Q,\Ii _{T_1\cap (G\cup A)}(3,4)) >0$. 
 By assumption
 $h^1(Q,\Ii _{G'\cup A}(3,4)) =0$ for all $G'\subsetneq G$, we have
$T_1\cap(G\cup A)=G\cup A$ and hence $T_1\supset G\cup A$. 
 Since $T_1$ is general in $|\Ii_{A''}(1,2)|$, $G\subset T_1$, $\#G=\#A''$ and $G$ is general, we get $$|\Ii_{A''}(1,2)|=|\Ii_{G\cup A}(1,2)| = |\Ii_G(1,2))|$$ and that each element of $|\Ii_G(1,2)|$ contains $A$. Since $\#G=4$ and $G$ is general the base locus of the pencil $|\Ii_G(1,2)|$ is the intersection of $2$
 general $C, C_1\in |\Oo_Q(1,2)|$. Since $\#(C\cap C_1) =4<10 =\#(G\cup A)$, we get a contradiction.

 \item[(b1.2)] Now assume $A\cap T_1=A''$.  Since $A'' \subsetneq A$, we have $h^1(Q,\Ii_{G\cup A''}(3,4)) =0$. Thus $h^1(T_1,\Ii _{T_1\cap (G\cup A'',T_1}(3,4)) =0$.
 Since $T_1\cap (G\cup A) =T_1\cap (G\cup A'')$, the long cohomology exact sequence of \eqref{eqe0}
 for $T_1$ gives 
 
$h^1(Q, G\cup A\setminus (G\cup A)\cap T_1)=h^1(Q, \Ii_{(G\setminus G\cap T_1)\cup (A\setminus A'')}(2,2)) >0$ and hence $h^1(Q,\Ii_{G\cup (A\setminus A'')}(2,2))>0$. 
 
 Take a general $D\in |\Ii_{A\setminus A''}(1,1)|$ and consider the exact sequence
 \begin{equation}\label{eqe01}
 0\to \Ii_{G\setminus D\cap G}(1,1)\to \Ii_{G\cup (A\setminus A'')}(2,2)\to \Ii_{D\cap (G\cup (A\setminus A''),D}(2,2)\to 0
 \end{equation}
 Since $\#(G\setminus G\cap D)\le 4$ and $G$ is general, $h^1(Q, \Ii_{G\setminus D\cap G}(1,1))=0$. Since $h^1(Q,\Ii_{G\cup (A\setminus A'')}(2,2))>0$, the long cohomology exact sequence of \eqref{eqe01} yields 
 $h^1(D, \Ii_{D\cap (G\cup (A\setminus A''),D}(2,2))>0$. Note that  $D$ is either a smooth rational curve or $D=L_1\cup L_2$ with
 $L_1\in |\Oo_Q(1,0)|$ and $L_2\in |\Oo_Q(0,1)|$. Since $\#G =4$, $h^0(Q,\Oo_Q(1,1)) =4$ and $G$ is general, we have $h^0(Q,\Ii _G(1,1)) =0$. Thus $G\nsubseteq D$. Thus $\#(D\cap (G\cup (A\setminus A''))\le 5$.
 
Assume that $D$ is a smooth rational curve. Since $\#(D\cap (G\cup (A\setminus A''))\le 5$ and by $$\deg \Ii_{D\cap (G\cup (A\setminus A''),D}(2,2)\ge -5+\deg(\Oo_D(2,2)) =-1$$
 we get  $h^1(D, \Ii_{D\cap (G\cup (A\setminus A''),D}(2,2))=0$
 , a contradiction.
 
 Now assume $D=L_1\cup L_2$. Since $G$ is general and $h^0(\Oo_Q(1,0)) =h^0(\Oo_Q(0,1)) =2$, $\#(L_i\cap G)\le 1$ for all $i$ and hence
 $\#(D\cap G)\le 2$ and $\#(D\cap G)=1$ if $L_1\cap L_2\in G$. Recall that $h^1(D, \Ii_{D\cap (G\cup (A\setminus A''),D}(2,2))>0$  and hence $h^1(Q, \Ii_{D\cap (G\cup (A\setminus A'')}(2,2))>0$. Set $W:=D\cap (G\cup (A\setminus A''))$. Fix $i\in \{1,2\}$
 such that $\#(W\cap L_i)\ge \#(W\cap L_{3-i})$. Consider the residual exact sequence of $L_i$:
 \begin{equation}\label{eqe02}
 0\to \Ii_{W\setminus W\cap L_i}(2,2)(-L_i)\to \Ii_W(2,2)\to \Ii_{L_i\cap W,L_i}(2,2)\to 0
 \end{equation}
Since $\#(G\cap L_i)\le 1$ and $\#(A\setminus A'')=2$, we have $\#(W\cap L_i)\le 3$. Since $\deg (\Oo_{L_i}(2,2)) =2$, we have $h^1(L_i,\Ii_{L_i\cap W,L_i}(2,2))=0$. Thus from the long cohomology exact sequence of \eqref{eqe02}, 
$h^1(Q, \Ii _{W\setminus W\cap L_i}(2,2)(-L_i)) >0$. We have $\#W\le 5$ and $\#(L_i\cap W)\ge \#(L_{3-i}\cap W)$, we have $\#(W\setminus W\cap L_i)\le 2$. Thus 
$h^1(Q,\Ii _{W\setminus W\cap L_i}(2,2)(-L_i))=0$, a contradiction. 
 \end{itemize}
  
\quad (b2) Assume $1\le a\le 3$. Fix $A_1\subset A$ such that $\#A_1=3$. Take a general $D_1\in |\Ii_{A_1}(1,1)|$. We use \eqref{eqe01} for $D_1$ instead of $D$. 

\noindent
If $h^1(Q, \Ii_{G\cup A\setminus (D_1\cap (G\cup A))}(2,2)) =0$, we conclude as above.
Thus we may assume $h^1(D,\Ii_{D_1\cap (G\cup A)}(2,2))>0$. We conclude as in in the last part of step (b1.2).
\end{proof}

\begin{lemma}\label{e6.00}
For all integers $0\le e\le 6$, the normalization $Y$ of a general nodal $D\in |\Oo_Q(5,6)|$ with exactly $e$ nodes has a unique base-point free $g^1_6$, the one induced by $|\Oo_Q(1,0)|$, and a unique $g^1_5$, the one induced by $|\Oo_Q(0,1)|$.

\end{lemma}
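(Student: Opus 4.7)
The existence of the two pencils is immediate: since $D$ is irreducible of bidegree $(5,6)$ on $Q\cong\PP^1\times\PP^1$, a general fiber of each ruling meets $D$ transversely away from the $e$ nodes, so the pullbacks of $|\h{O}_Q(1,0)|$ and $|\h{O}_Q(0,1)|$ to $Y$ are base-point-free pencils of degrees $6$ and $5$ respectively. My strategy for uniqueness is to combine Castelnuovo-Severi (Remark \ref{CS}) with a comparison of Severi varieties of different bidegrees.

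For the $g^1_5$, a hypothetical second $g^1_5$ on $Y$, together with the ruling's $g^1_5$, gives two distinct degree-$5$ coverings $Y\to\PP^1$; since $5$ is prime they admit no common factorization, so Castelnuovo-Severi yields $g(Y)\le(5-1)(5-1)=16$. This disposes of $e\in\{0,1,2,3\}$, where $g(Y)\ge 17$. For $e\in\{4,5,6\}$ the two $g^1_5$'s give a birational map $Y\to\PP^1\times\PP^1$ with image a nodal bidegree $(5,5)$ curve of geometric genus $g$, so $Y$ lies in the image of the Severi variety $\Sigma'_g\subset|\h{O}_Q(5,5)|$. The dimension bound
\[
\dim(\Sigma'_g/\Aut(Q))\;\le\;\dim|\h{O}_Q(5,5)|-(16-g)-6\;=\;g+13
\]
is strictly smaller than $\dim(\Sigma_g/\Aut(Q))=g+15$, so once the moduli map $\mu_g\colon\Sigma_g/\Aut(Q)\to\h{M}_g$ is known to be generically finite its image cannot be contained in the $(5,5)$-locus, and a general $D\in\Sigma_g$ must have $Y$ outside it.

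For the $g^1_6$, observe first that the ruling $g^1_5$ combined with Castelnuovo-Severi forces the gonality of $Y$ to be exactly $5$: a $g^1_d$ on $Y$ with $d\le 4$ would yield $g(Y)\le(d-1)\cdot 4\le 12$, contradicting $g(Y)\ge 14$. In particular $Y$ is neither hyperelliptic nor trigonal, so any second $g^1_6$ shares no common factorization with the ruling $g^1_6$ (a shared factor of degree $2$ or $3$ being excluded). Combining such a second $g^1_6$ with the ruling's $g^1_5$ (coprime degrees) yields a birational map $Y\to\PP^1\times\PP^1$ onto a new bidegree $(5,6)$ nodal curve $D'\in\Sigma_g$ of which $Y$ is the normalization; uniqueness thus reduces to the generic injectivity of $\mu_g$.

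Both reductions lead to the same main obstacle: the generic finiteness (respectively injectivity) of $\mu_g$. I would address this by an infinitesimal analysis identifying $T_{[D]}\Sigma_g\cong H^0(D,N_{D/Q}(-\Delta))$ (with $\Delta$ the node divisor) and showing that its image in $H^1(Y,T_Y)$ meets the image of $\mathfrak{aut}(Q)$ trivially. As a cross-check I would verify the uniqueness directly in the extremal case $e=6$ via the del Pezzo model $\PP^2_7$ of Example \ref{ex1}(i): enumerating classes $L\in\mathrm{Pic}(\PP^2_7)$ with $\dim|L|=1$ and $L\cdot\widetilde{C}\in\{5,6\}$ confirms that only the two ruling pencils $|l-e_1|$ and $|l-e_2|$ qualify, and a parallel Picard-group computation on the appropriate iterated blow-up handles the other values of $e\ge 1$.
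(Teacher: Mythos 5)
Your existence argument and your treatment of the $g^1_5$ are fine and essentially match the paper: Castelnuovo--Severi disposes of a second $g^1_5$ when $g=20-e\ge 17$, and for $e\ge 4$ one compares the dimension of the nodal $(5,5)$ Severi variety with that of the $(5,6)$ one. Note, though, that this comparison can be run directly between $|\Oo_Q(5,5)|$ and $|\Oo_Q(5,6)|$ on the fixed $Q$, as the paper does, without invoking any property of the moduli map.

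The gap is in the $g^1_6$ step. You reduce uniqueness of the $g^1_6$ to generic injectivity of $\mu_g\colon \Sigma_g/\Aut(Q)\to \Mm_{g}$, but in the paper that injectivity is Proposition \ref{e6} and is \emph{deduced from} Lemma \ref{e6.00}; your reduction inverts the logical order, so you must supply an independent proof of injectivity. The method you propose --- an infinitesimal computation showing that the image of $T_{[D]}\Sigma_g$ in $H^1(Y,T_Y)$ meets the image of $\mathfrak{aut}(Q)$ trivially --- can only ever yield generic \emph{finiteness} of $\mu_g$; a generically finite $\mu_g$ whose general fiber consists of several $\Aut(Q)$-orbits is exactly the situation in which $Y$ carries several $g^1_6$'s, so finiteness proves nothing and injectivity is not accessible by a tangent-space argument. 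Your fallback ``cross-check'' has the same defect in another guise: enumerating classes $L\in\mathrm{Pic}(\PP^2_7)$ with $\dim|L|=1$ and $L\cdot\widetilde{C}\in\{5,6\}$ only rules out pencils on $Y$ that are \emph{cut out by linear systems on the surface}, whereas an abstract base-point-free $\Ll\in\mathrm{Pic}^6(Y)$ with $h^0(\Ll)=2$ need not arise this way --- excluding those is precisely the hard content of the lemma. The paper handles this by a postulation argument: for a general $B\in|\Ll|$ with image $A\subset Q$ (after passing to a minimal partial normalization with node set $G$), Riemann--Roch together with the fact that $|\omega_{Y_G}|$ is cut out by the adjoint system $|\Oo_Q(3,4)|$ converts $h^0(\Ll)=2$ into $h^1(Q,\Ii_{G\cup A}(3,4))>0$ with $h^1(Q,\Ii_{G\cup A'}(3,4))=0$ for all $A'\subsetneq A$, and Lemmas \ref{aaa1}--\ref{aaa2} then force $A$ to lie on a line of $|\Oo_Q(1,0)|$. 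Some substitute for this analysis of superabundant configurations is needed; your proposal does not contain one.
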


\begin{proof}
The curve $D$ has geometric genus $g=20-e$. We first prove that $Y$ has a unique $g^1_5$.
First of all $Y$ has no $g^1_4$, because any element of $|\Oo_{\PP^1\times \PP^1}(5,4)|$ has arithmetic genus $12<20-e$. Assume that $Y$ has another base point free $g^1_5$.
Since $5$ is prime, these two base point free $g^1_5$ induces a morphism $\pi : Y\to \PP^1\times \PP^1$ birational onto its image $\pi(Y)\in |\Oo_{\PP^1\times \PP^1}(5,5)|$. Since
$\pi(Y)$ has arithmetic genus $q=16$, we have $4\le e\le 6$. 

The set of all possible such $\pi(Y)$'s on a fixed $Q$ depend on at most $$\dim |\Oo_{\PP^1\times \PP^1}(5,5)|  -(q-g)=35-(16-(20-e)) =39-e$$ parameters by Remark \ref{oo1}.
Since the set of all $D$ is irreducible  dimension $41-e> 39-e$ a curve with two $g^1_5$ is not general in $\w{\Sigma}_g$, i.e does not constitute a component.

Now we prove that a general $Y$ has a unique base point free $g^1_6$, the one induced by $|\Oo_Q(1,0)|$.
Set $E:= \mathrm{Sing}(D)$. Since $D$ is general, $E$ is a general subset of $Q$ with cardinality $e$; Remark \ref{oo1}.
Let $f: Y\to D$ be the normalization map. Fix a possibly incomplete base point free pencil $\Ll\in \mathrm{Pic}^6(Y)$ whi. We need to prove that $\Ll \cong f^\ast (\Oo_D(1,0))$. Since $Y$ has a unique $g^1_5$, $h^0(\Ll)=2$. For any $G\subseteq E$ let $f_G: Y_G\to D$ be the partial normalization of $D$ in which we only normalize the points of $G$. Hence $Y_G$ is nodal with exactly $e-\#G$ nodes. Let $a_G: Y\to Y_G$ denote the normalization map. Take a minimal $G\subseteq E$ such that there is a degree $6$ line bundle $\Rr$
on $Y_G$ such that $h^0(\Rr)\ge 2$ and $a_G^\ast(\Rr) =\Ll$. Since $h^0(\Ll)=2$ and $\Ll$ is base point free, $h^0(\Rr)=2$ and $\Rr$ is base point free. Fix a general $B\in |\Rr|$ and set $A:= f_G(B)\subset D$. Since $\Rr$ is base point free, $B$ is formed by $6$ distinct points of $Y_G$ and $G\cap B=\emptyset$.
Since $h^0(\Rr) =2$,  Riemann-Roch on $Y_G$ gives $B$ imposes exactly $5$ independent conditions on $|\omega _{Y_G}|$. Since the canonical series $|\omega_{Y_G}|$ is completely cut out on $D$ by $|\omega _Q+D|=|\Oo_Q(-2,-2)+ \Oo_Q(5,6)|=|\Oo_Q(3,4)|$,
we get $h^1(Q,\Ii_{G\cup A}(3,4)) >0$. Since $h^0(\Rr)=2$ and $\Rr$ is base point free, $h^1(Q,\Ii_{G\cup A'}(3,4)) =0$ for all $A'\subsetneq A$.
 Thus Lemma \ref{aaa2} concludes the proof.
 \end{proof}

\begin{proposition}\label{e6}
The generic fiber of the map $\mu _2: \Gamma_2\to \Mm_{14}$ is formed by a unique orbit by the group $\mathrm{Aut}(\PP^5)$, i.e. a general $X\in \Gamma_2$ has a unique  $g^5_{15}$.
\end{proposition}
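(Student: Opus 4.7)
The proposition will follow once we show that any complete $g^3_{11}$ on a general $Y \in \mu_2(\Gamma_2)$ equals $|A + B|$, where $A := f^*\Oo_Q(1,0)$ and $B := f^*\Oo_Q(0,1)$ are the unique base-point-free $g^1_6$ and $g^1_5$ on $Y$ guaranteed by Lemma \ref{e6.00}. Indeed, complete $g^5_{15}$'s on $Y$ correspond bijectively to complete $g^3_{11}$'s via the residualization involution $|L| \leftrightarrow |K_Y - L|$, and two embeddings of $Y$ in $\PP^5$ sharing the same hyperplane line bundle differ by an element of $\mathrm{Aut}(\PP^5)$. Furthermore, the construction of $\Gamma_2$ in Section 3 yields $\Oo_X(1) \cong \omega_Y \otimes A^{-1} \otimes B^{-1}$, identifying $|K_Y - A - B|$ as the candidate unique $g^5_{15}$.

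Let $|M|$ be a complete $g^3_{11}$ on $Y$, and let $u_M : Y \to \PP^3$ be the morphism induced by the mobile part of $|M|$, with image $C_M$. Three steps complete the argument. (i) \emph{$u_M$ is birational}: the existence of $A$ and $B$ combined with the Castelnuovo--Severi inequality (Remark \ref{CS}) excludes any $g^1_2$ on $Y$ (which would force $g \leq 4$), while Propositions \ref{a1} and \ref{b1} eliminate $g^1_3$ and $g^1_4$; the case analysis paralleling Lemma \ref{a2}, using the primality of $11$, then excludes $\deg u_M \geq 2$ by checking each possible pair $(\deg u_M, \deg C_M)$ against these gonality constraints. (ii) \emph{$C_M$ lies on a smooth quadric $Q'$ and $|M|$ is base-point-free}: the dimension count at the beginning of Section 3 shows that the family of $g^3_{11}$'s on curves of genus $14$ whose image lies on a quadric and whose base locus has size $b \geq 1$ has moduli image of dimension at most $29 - b < 29 = \dim \mu_2(\Gamma_2)$, so a generic $[Y] \in \mu_2(\Gamma_2)$ admits no such $|M|$; the remaining alternative, that $C_M$ is not contained in any quadric, is the ACM case of Section 4 (a), ruled out below. (iii) \emph{Identification}: writing $C_M \in |\Oo_{Q'}(a, 11-a)|$, the gonality constraint $Y$ has no $g^1_k$ for $k \leq 4$ applied to the two rulings of $Q'$ forces $a, 11-a \geq 5$, i.e. $\{a, 11-a\} = \{5, 6\}$. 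The pullbacks $u_M^*\Oo_{Q'}(1,0)$ and $u_M^*\Oo_{Q'}(0,1)$ are then base-point-free pencils of degrees $6$ and $5$ on $Y$, which by Lemma \ref{e6.00} must coincide with $A$ and $B$ respectively. Consequently $M \cong u_M^*\Oo_{Q'}(1,1) \cong A \otimes B$, that is, $|M| = |A + B|$.

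The principal obstacle is step (ii), specifically ruling out the ACM case: if $C_M$ is not contained in any quadric, then by the analysis in Section 4 (a), the residual $|K_Y - M|$ embeds $Y$ as an element of $\Gamma_1$, whence $[Y] \in \mu_1(\Gamma_1)$. However, $\mu_1(\Gamma_1)$ and $\mu_2(\Gamma_2)$ are both irreducible subvarieties of $\Mm_{14}$ of dimension at most $29$, while every element of $\mu_2(\Gamma_2)$ carries the pencil $B$ (hence has gonality $\leq 5$) and the generic element of $\mu_1(\Gamma_1)$ is $7$-gonal by Proposition \ref{e2}. Thus $\mu_2(\Gamma_2) \not\subset \mu_1(\Gamma_1)$, and being irreducible of equal dimension, their intersection is proper in $\mu_2(\Gamma_2)$; a general $[Y] \in \mu_2(\Gamma_2)$ therefore admits no $g^3_{11}$ whose image fails to lie on a quadric, completing the proof.
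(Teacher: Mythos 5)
Your proof runs on the same engine as the paper's---everything ultimately reduces to Lemma \ref{e6.00}, the uniqueness of the base-point-free $g^1_5$ and $g^1_6$ on $Y$---but the architecture is genuinely different and more ambitious. The paper never classifies arbitrary $g^3_{11}$'s: it only considers a second embedding $X_1\in\mu_2^{-1}([Y])\subset\Gamma_2$, observes that (since the complement of the good open set $W\subset\Gamma_2$ has moduli image of dimension $\le 28<29$) every element of the generic fiber is itself a general point of $W$, so both dual curves are general nodal $(5,6)$-curves and both hyperplane bundles equal $|g^1_5+g^1_6|$. You instead classify \emph{all} complete $g^3_{11}$'s on a general $Y$, which actually proves the stronger reading of the statement (no other very ample $g^5_{15}$ at all, not even one embedding $Y$ as an element of $\Gamma_1$); the paper's argument, read literally, only controls the fiber of $\mu_2$. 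Your exclusion of the ACM case via the gonality dichotomy (every curve in $\mu_2(\Gamma_2)$ carries a $g^1_5$, while the generic curve in $\mu_1(\Gamma_1)$ is $7$-gonal) is sound, but it needs $\dim\mu_2(\Gamma_2)=29$ \emph{exactly}, whereas you first write ``at most $29$'' and then use the equality. The equality does follow from Lemma \ref{e6.00} (the fiber of $\Sigma_{14}\to\Mm_{14}$ over a general point is a single $\mathrm{Aut}(Q)$-orbit), but this should be said, since otherwise the inequality $\dim\bigl(\overline{\mu_1(\Gamma_1)}\cap\{\text{gonality}\le 5\}\bigr)<\dim\mu_2(\Gamma_2)$ is not justified; the same remark applies to your $b\ge 1$ dimension count.

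The one genuine omission is the quadric cone. In step (ii) you conclude that $C_M$ lies on a quadric, and in step (iii) you silently take that quadric to be smooth, writing $C_M\in|\Oo_{Q'}(a,11-a)|$ and invoking the two rulings. An irreducible non-degenerate curve of degree $11$ contained in a quadric lies either on a smooth quadric or on a cone, and the cone has only one ruling, so your argument does not apply to it as written. The case can be disposed of by exactly the dimension count you use elsewhere (quadric cones form an $8$-dimensional family, and the Severi varieties in $|4h+11f|$ and $|5h+11f|$ on $\mathbb{F}_2$ each have dimension $35$, giving a locus of dimension $\le 8+35-15=28<29$ in $\Mm_{14}$), but the case is missing. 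With that supplied, and with the $\deg u_M\ge 2$ cases of step (i) written out (there are more of them than in Lemma \ref{a2}, since the mobile degree is $11-b$ rather than $11$), the argument is complete.
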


\begin{proof}
A non-empty open subset $W$ of $\Gamma _2$ is formed by the normalizations of all nodal $C\in |\Oo_Q(5,6)|$ in which the set $E$ of the nodes is `` general '' in the sense that each for all $a, b\in \NN$ and each $S\subset E$ we have
$h^0(Q,\Ii_S(a,b)) =\max \{0,(a+1)(b+1)-\#S\}$. Fix a general $X\in \Gamma_2$. In particular $X\in W$. Assume that $X$ is isomorphic to another element $X_1\in \Gamma_2$. To prove the proposition we need to prove that $X$ and $X_1$ are projectively equivalent. 
For a general $X$ the curve $X_1$ is general in $W$. Let $C_1$ be dual curve of $X_1$. It is sufficient to prove that $C$ and $C_1$ are projectively equivalent.
Since $C_1$ is a general nodal element of $|\Oo_Q(5,6)|$ of geometric genus $14$ and $X\cong X_1$, the case $e=6$
of Lemma \ref{e6.00} applied to $C_1$ and $C$ give that both $C_1$ and $C$ are obtained by the unique pair of base point free $g^1_5$ and $g^1_6$ on $X\cong X_1$ so that $\h{O}_{C_1}(1)\cong\h{O}_C(1)=|g^1_5+g^1_6|$, hence $C$ and $C_1$ are projectively equivalent. 
\end{proof}

\begin{remark}\label{e31}
By Proposition \ref{b1} and Proposition \ref{a1}, all smooth $X\in\Gamma_2$ 
is $5$-gonal.

\end{remark}

\begin{proposition}\label{e21}
Fix a general $X\in \Gamma _1$. Then

\quad (a) $X$ has a unique very ample $g^5_{15}$, $\Oo_X(1)$, and a unique very ample $g^3_{11}$, $K_X(-1)$.

\quad (b) The general fiber of the morphism $\mu_1: \Gamma _1\to \Mm_{14}$ has a unique orbit under the action of the group $\mathrm{Aut}(\PP^5)$.
\end{proposition}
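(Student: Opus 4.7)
The plan is to prove part (b) first and then derive (a) as a consequence. By Riemann--Roch and Serre duality, the map $L\mapsto K_X\otimes L^{-1}$ is an involutive bijection between complete $g^5_{15}$'s and complete $g^3_{11}$'s on $X$, exchanging $\Oo_X(1)\leftrightarrow K_X(-1)$. Hence uniqueness of the very ample $g^5_{15}$ corresponds to uniqueness of the very ample $g^3_{11}$, provided that every very ample $g^3_{11}$ on a general $X\in\Gamma_1$ is of $\Gamma_1$-type, i.e.\ arises from an ACM embedding in $\PP^3$. Establishing this reduction is the content of the classification step below.

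First I would classify the possible very ample $g^3_{11}$'s on $X$ via their image curves. Let $L$ be such a line bundle, with image $C_L\subset\PP^3$ a smooth curve of degree $11$ and genus $14$. The analysis of Sections 3 and 4 exhausts the possibilities: (i) $C_L$ is ACM, i.e.\ an element of the irreducible family $\Gamma$; (ii) $C_L$ lies on a smooth quadric $Q$ with $C_L\in|\Oo_Q(3,8)|$ (the unique smooth bidegree satisfying $(a-1)(b-1)=14$ and $a+b=11$); or (iii) $C_L$ lies on an irreducible cubic surface and on no quadric. Case (ii) is excluded by Proposition \ref{a1}, since either ruling of $Q$ restricts to a $g^1_3$ on $X$ and $\h{H}_{15,14,5}$ contains no trigonal curve. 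Case (iii) is excluded for generic $X\in\Gamma_1$ by the estimate in Section 4 (case (iv)): such curves form a family of dimension at most $43$ in $\h{H}_{11,14,3}$, hence at most $28$ after quotienting by $\mathrm{Aut}(\PP^3)$, strictly less than $\dim \mu_1(\Gamma_1)=29$.

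Consequently every very ample $g^3_{11}$ on a general $X\in\Gamma_1$ is an ACM embedding, and (b) reduces to the claim that the natural map $\Gamma/\mathrm{Aut}(\PP^3)\to\Mm_{14}$ is generically injective. From $\dim\Gamma=44$, $\dim\mathrm{Aut}(\PP^3)=15$, and $\dim \h{G}_{\Gamma_1}=29$ (Remark \ref{new1}(iii)), both source and image of this map have dimension $29$, so it is generically finite; the main obstacle is to prove that its generic degree is one.

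For this last step I would attempt a canonical reconstruction of the ACM embedding from the abstract curve $X$. By Proposition \ref{e2}, $X$ carries a $g^1_7$, call it $R$, cut out by the $4$-secant lines of $C$; a Lemma \ref{e6.00}-type superabundance argument (on a quartic K3 containing $C$ as in Example \ref{ex2}) should show that $R$ is unique on a general $X\in\Gamma_1$. The hyperplane class then factors as $\Oo_C(1)\cong\Oo_C(R)\otimes\Oo_C(M)$ with $M$ the degree-$4$ residual class cut on $C$ by a $4$-secant line, and uniqueness of $M$ follows if the family of $4$-secants is discrete (equivalently, $h^0(\Oo_C(1)\otimes \Oo_C(-R))=1$), yielding uniqueness of $\Oo_C(1)$. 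Alternatively, specializing to the $C$ of Example \ref{ex2} on a quartic K3 surface and applying Lazarsfeld--Mukai bundle methods on the K3 to bound $W^3_{11}(X)$, one can verify uniqueness of the very ample $g^5_{15}$ on this specific $X$ and then pass to the generic case by upper semi-continuity of fiber dimension together with the irreducibility of $\Gamma$.
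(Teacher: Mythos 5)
Your reduction steps track the paper's proof closely: the residuation $L\mapsto K_X\otimes L^{-1}$, the trichotomy for the image of a very ample $g^3_{11}$ (on a quadric / on a cubic / ACM), the exclusion of the quadric case via trigonality and of the cubic case via the Gruson--Peskine dimension bound are all exactly what the paper does. (Two small caveats: you should also dispose of the quadric cone, which is vacuous here because a smooth curve of odd degree $2a+1$ on a cone has genus $a^2-a\ne 14$; and the assertion $\dim\mu_1(\Gamma_1)=29$ is equivalent to generic finiteness of $\mu_1$ modulo $\mathrm{Aut}(\PP^5)$, so it cannot simply be read off from $\dim\h{G}_{\Gamma_1}=29$ without comment.)

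The genuine gap is in your final step. First, the $g^1_7$ on a general $X\in\Gamma_1$ is not unique: every quadrisecant line $J$ of the ACM model $C$ gives a complete base-point-free $g^1_7=|\Oo_C(1)(-C\cap J)|$, distinct lines give distinct pencils (the degree-$4$ divisors $C\cap J$ are rigid, since otherwise $X$ would be $4$-gonal, contradicting Proposition \ref{b1}), and Cayley's quadrisecant formula $\tfrac{(d-2)(d-3)^2(d-4)}{12}-\tfrac{g(d^2-7d+13-g)}{2}$ gives $35$ such lines for $(d,g)=(11,14)$; the paper's own proof is explicitly phrased for ``each base point free $g^1_7$'', consistent with non-uniqueness. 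Second, and more seriously, even granting a distinguished $R$, the factorization $\Oo_C(1)\cong\Oo_C(R)\otimes\Oo_C(M)$ with $M$ rigid does not make $\Oo_C(1)$ intrinsic: $M$ is by definition $\Oo_C(1)-R$, so $h^0(\Oo_C(1)(-R))=1$ only says the residual divisor is rigid \emph{for the given embedding}; a second very ample $g^3_{11}$, say $|M'|$, produces its own rigid residual $M'-R$, and nothing in your argument forces these two degree-$4$ classes to coincide. This identification of the two quadrisecant divisors is precisely the crux, and it is where the paper works hardest (transporting a divisor $E_1$ of a $g^1_7$ through the isomorphism $\phi:C\to C_1$ and matching base loci). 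Your K3/Lazarsfeld--Mukai fallback is only a pointer, and semicontinuity from a special fiber would at best give finiteness of the generic fiber of $\mu_1$, not that it is a single $\mathrm{Aut}(\PP^5)$-orbit.
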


\begin{proof}
Note that part (b) is just a translation of the uniqueness of the very ample $g^5_{15}$ claimed in part (a).

We proved that the set of all $X_1\in \Gamma_1$ whose dual is not smooth have lower dimension. In particular we may assume that the dual curve $C$ of a general $X\in \Gamma_1$ is smooth. We may also assume;  $C$ is  not in a cubic surface (\cite[Prop. B1]{gp1}, $C$ is ACM, $C$ is $7$-gonal and that for each $g^1_7$, $|R|$, on $C$ there is a $4$-secant line $J$ of $C$ such that $|R|$ is induced by the pencil of planes through $J$ (Proposition \ref{e2})..

To prove part (a) by duality it is sufficient to prove that the dual $C$ of  a general $X\in\Gamma_1$
has a unique very ample $g^3_{11}=|\Oo_C(1)|$.  
Take any very ample $|M|= g^3_{11}$ on $C$. Call $C_1\subset \PP^3$ the image of $C$ by this $|M|$, i.e. call $\phi: C\to C_1\subset \PP^3$ the embedding associated to this $|M|$. By assumption $\phi$ is an isomorphism of genus $14$ smooth curves. Note that $M= \phi^\ast (\Oo_{C_1}(1))$. Two line bundles on $C$ are isomorphic if they are associated to the same effective divisor of $C$. Thus to prove that $|M| =|\Oo_C(1)|$, i.e. to prove that $M\cong \Oo_C(1)$ it is sufficient to prove the existence of $A\in |M|$ with $A\in |\Oo_C(1)|$, i.e. sufficient to find a plane $H\subset \PP^3$
such that $\phi^{-1}(H\cap C_1)\in |\Oo_C(1)|$. The generality of $X$ \cite[Prop. B1]{gp1} imply that $C_1$ is not contained in a cubic surface. Thus $C_1$ is ACM and in the irreducible component $\Delta$ of the set
of all ACM space curves consisting of those which are linked to curves of genus $2$ and degree $5$. Since $X$ is general in $\Gamma_1$, $C_1$ is general in $\Delta$ and hence its gonality is computed by $4$-secant lines, i.e. $C_1$ has gonality $7$ and for
each base point free $g^1_7= |R_1|$ on $C_1$ there is a $4$-secant line $J_1$ such that for all $E_1\in |R_1|$ there is a plane $H\supset J_1$ such that $H\cap C_1=(J_1\cap C_1)+E_1$. Fix $E_1\in |R_1|$ and set $E:= \phi^{-1}(E_1)$. Since $\phi$ is an isomorphism,
$|R|:= |E|$ is a also $g^1_7$. Thus there is a $4$-secant line $J$ of $C$ such that $|R|$ is induced by the intersection with $C$ of all planes containing $J$. Take a plane  $H\supset E$. We have $H\cap C\in |\Oo_C(1)|$ and $H\cap C = (C\cap J)+E$.
Note that $C\cap J$ is the base locus of $\Oo_C(1)(-E)$. Since $E =\phi^{-1}(E_1)$ and $J_1\cap C_1$ is the base locus of $|\Oo_{C_1}(1)(-E_1)|$ and  $\phi^{-1}(J_1\cap C_1) = J\cap C$. Thus $\phi^{-1}((J_1\cap C_1)+E_1)) = (J\cap C)+E\in |\Oo_C(1)|$.
Thus $\phi^\ast (\Oo_{C_1}(1)) \cong \Oo_C(1)$.
\end{proof}

\section{An epilogue, a small remark on the irreducibility of $\h{H}_{g+1,g,5}$}

So far we have dealt with a special case ($g=14$) of the Hilbert scheme $\h{H}_{g+1,g,5}$
and showed its reducibility. 
For $g\le 13$, the followings are known already.
\begin{remark}\label{lowg}
\begin{itemize}
\item[(i)]
$\h{H}_{g+1,g,5}\neq \emptyset$ if and only if $g\ge 12$. 
\item[(ii)]
For $g=12$, $\h{H}_{g+1,g,5}$ is reducible consisting of curves of maximal genus and the reducibility is known by \cite[Corollary 3.12, page 92]{he} \cite[proof of Theorem 2.4]{speciality}.
\item[(iii)]
For $g=13$, $\HL{g+1,g,5}$ is irreducible by \cite[Theorem 3.4]{lengthy}. 

\noindent Since $\pi(14,13,6)<g=13$  
 we have $\HL{g+1,g,5}=\h{H}_{g+1,g,5}$ and is irreducible.
\end{itemize}
\end{remark}

The case $g=14$ which we treated in this paper is the first non-trivial case in this context.
For curves with higher $g\ge 15$, virtually nothing is known about the 
irreducibility of $\h{H}_{g+1,g,5}$. However the main result of this paper suggests that 
the irreducibility of $\h{H}_{d,g,5}$ beyond the range $d\ge g+5$ conjectured by Severi does not 
hold  for $d$ not too much below $g+5$. 

\medskip
\bigskip
\noindent
 {\bf \Large{Declarations}}

\medskip
\noindent
{\bf\large{Conflict of interest}} The authors have no conflict of interest.

\end{document}